\newcommand{\CFH}{{\mathcal {FH}}}
\newcommand{\Ad}{{\mathrm{Ad}}}
\newcommand{\GL}{{\mathrm{GL}}}
\newcommand{\U}{{\mathrm{U}}}
\newcommand{\Hom}{{\mathrm{Hom}}}
\renewcommand{\Re}{{\mathrm{Re}}}
\newcommand{\Sp}{{\mathrm{Sp}}}
\newcommand{\tr}{{\mathrm{tr}}}
\newcommand{\vsp}{{\vspace{0.2in}}}
\newcommand{\con}{\textit{C}}
\newcommand{\oO}{\operatorname{O}}
\newcommand{\oU}{\operatorname{U}}
\newcommand{\oK}{\operatorname{K}}
\newcommand{\oZ}{\operatorname{Z}}
\newcommand{\gC}{{\mathfrak g}_{\C}}
\newcommand{\C}{\mathbb{C}}
\newcommand{\R}{\mathbb R}
\newcommand{\K}{\mathbb{K}}
\newcommand{\bt}{\mathbf{t}}
\newcommand{\abs}[1]{\lvert#1\rvert}
\newcommand{\tbt}{\tilde{\mathbf{t}}}
\newcommand{\la}{\langle}
\newcommand{\ra}{\rangle}
\newcommand{\be}{\begin {equation}}
\newcommand{\ee}{\end {equation}}
\newcommand{\bee}{\begin {equation*}}
\newcommand{\eee}{\end {equation*}}
\theoremstyle{Theorem}
\newtheorem{introconjecture}{Conjecture}
\newtheorem{introtheorem}[introconjecture]{Theorem}
\theoremstyle{Theorem}
\newtheorem{lem}{Lemma}[section]
\newtheorem{leml}[lem]{Lemma}
\newtheorem{prpl}[lem]{Proposition}
\theoremstyle{Theorem}
\newtheorem{prp}{Proposition}[section]
\newtheorem{lemp}[prp]{Lemma}
\newtheorem{prpp}[prp]{Proposition}
\theoremstyle{Plain}
\theoremstyle{Definition}
\begin{document}

\title{Uniqueness of Bessel models: the archimedean case}

\author [D. Jiang] {Dihua Jiang}
\address{School of Mathematics\\
University of Minnesota\\
206 Church St. S.E., Minneapolis\\
MN 55455, USA} \email{dhjiang@math.umn.edu}

\author [B. Sun] {Binyong Sun}
\address{Hua Loo-Keng Key Laboratory of Mathematics, Academy of Mathematics and Systems Science\\
Chinese Academy of Sciences\\
Beijing, 100190,  P.R. China} \email{sun@math.ac.cn}

\author [C.-B. Zhu] {Chen-Bo Zhu}
\address{Department of Mathematics\\
National University of Singapore\\
2 Science drive 2\\
Singapore 117543} \email{matzhucb@nus.edu.sg}

\subjclass[2000]{22E30, 22E46 (Primary)}

\keywords{Classical groups, irreducible representations, Bessel models}

\begin{abstract}
In the archimedean case, we prove uniqueness of Bessel models for
general linear groups, unitary groups and orthogonal groups.
\end{abstract}

\maketitle

\section{Introduction}

Let $G$ be one of the classical Lie groups
\begin{equation}\label{groups}
  \GL_{n}(\R),\,\GL_{n}(\C),\, \oU(p,q),\,\oO(p,q),\, \oO_{n}(\C).
\end{equation}
In order to consider Bessel models for $G$, we consider, for each
non-negative integer $r$ satisfying
\[
n\geq 2r+1,\quad p\geq r,\quad q\geq r+1,
\]
the $r$-th Bessel subgroup
\[
  S_r=N_{S_r}\rtimes G_0
\]
of $G$, which is a semidirect product and which will be described
explicitly in Section \ref{bes}. Here $N_{S_r}$ is the unipotent
radical of $S_r$, and $G_0$ is respectively identified with
\begin{equation}\label{groups0}
  \GL_{n-2r-1}(\R),\,\GL_{n-2r-1}(\C),\,\oU(p-r,q-r-1),\,\oO(p-r,q-r-1),\,
  \oO_{n-2r-1}(\C).
\end{equation}

Let $\chi_{S_r}$ be a generic character of $S_r$ as defined in
Section \ref{gen}. The main result of this paper is the following
theorem, which is usually called the (archimedean) local uniqueness
of Bessel models for $G$.

\begin{introtheorem}\label{main}
Let $G$, $G_0$, $S_r$ and $\chi_{S_r}$ be as above. For every
irreducible representation $\pi$ of $G$ and
$\pi_0$ of $G_0$ both in the class $\CFH$, the inequality
\[
   \dim \Hom_{S_r}(\pi\widehat{\otimes}
   \pi_0,\chi_{S_r})\leq 1
\]
holds.
\end{introtheorem}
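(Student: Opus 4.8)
The plan is to reduce the statement to a question about distributions on a real algebraic group and then apply a Gelfand–Kazhdan type criterion. First I would realize $\Hom_{S_r}(\pi\widehat\otimes\pi_0,\chi_{S_r})$ as a space of continuous functionals, and by the theory of Casselman–Wallach globalization together with the characterization of the class $\CFH$, identify the dimension of this Hom-space with the dimension of a space of $(S_r,\chi_{S_r})$-equivariant (generalized) functions on the group $G\times G_0$ (or, after unfolding the induced picture, on a suitable homogeneous space). Concretely, embedding $G_0$ diagonally, one wants to bound $\Hom_{S_r}(C^\infty(G\times G_0),\chi_{S_r}\otimes\pi^\vee\otimes\pi_0^\vee)$; the key point is that this is controlled by a single orbit-geometry computation once one knows a suitable multiplicity-one statement for the pair $(S_r\times S_r,\Delta S_r)$ acting on $G\times G_0$.

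The core of the argument is an \emph{involution (Gelfand pair) criterion}: exhibit an anti-automorphism $\sigma$ of $G\times G_0$ preserving $S_r$ and the character $\chi_{S_r}$ (up to the required twist), together with the property that $\sigma$ fixes every relevant double coset, and then show that any distribution on $G\times G_0$ which is $(S_r\times S_r)$-equivariant against $\chi_{S_r}\otimes\chi_{S_r}^{-1}$ is automatically $\sigma$-invariant. Granting this, a standard argument (if $\pi\widehat\otimes\pi_0$ had a two-dimensional Bessel functional space, one produces a bilinear form that is simultaneously symmetric and anti-symmetric) forces the dimension to be at most one. The technical heart is therefore the vanishing statement: every such equivariant distribution is $\sigma$-invariant. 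I expect to prove this by stratifying $G\times G_0$ by $S_r\times S_r$-orbits, or more precisely by a finite filtration adapted to the Bruhat-type decomposition relative to the parabolic whose Levi contains $G_0$, and checking on each stratum that no equivariant distribution supported there can obstruct $\sigma$-invariance. On each stratum this reduces, via Frobenius reciprocity for distributions (Bernstein's localization principle in the real setting, as developed by Aizenbud–Gourevitch–Rallis–Schiffmann and Sun–Zhu), to a statement about the action of a reductive stabilizer on a normal space, which one controls by computing characters of the relevant isotropy representations.

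The \textbf{main obstacle} will be the orbit analysis and the accompanying vanishing of transversal distributions on the "bad" strata — the strata where the stabilizer is too small or the normal-bundle action has invariant vectors. This is where the hypotheses $n\ge 2r+1$, $p\ge r$, $q\ge r+1$ must be used: they guarantee that the generic character $\chi_{S_r}$ is genuinely non-degenerate on $N_{S_r}$, so that on each offending stratum the unipotent part of the equivariance already kills the distribution. Handling this uniformly across the five families in \eqref{groups} — and in particular treating the unitary and orthogonal cases, where one must also track the behavior of the quadratic/Hermitian form and of the extra orthogonal or unitary factor — is the part that requires the most care; everything else (the globalization input, the reduction to distributions, the final symmetry argument) is formal once that transversal vanishing is in hand.
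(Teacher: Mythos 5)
Your proposal follows the direct Gelfand--Kazhdan route: realize Bessel functionals as $(S_r\times S_r,\chi_{S_r}\otimes\chi_{S_r}^{-1})$-equivariant distributions, exhibit an anti-involution, stratify by orbits, and kill transversal distributions on the bad strata. This is \emph{not} what the paper does, and the difference matters. The paper's entire point is to \emph{avoid} this orbit analysis for general $r$ by reducing to the already-established $r=0$ (spherical) case of \cite{SZ}: given a Bessel functional $\mu$, one forms a degenerate-principal-series-type representation $\pi'_s$ of a larger group $G'\supset G$ induced from $P'_{r+1}$ with inducing data $\pi_0\widehat\otimes\sigma$ ($\sigma$ a $\psi_{r+1}^{-1}$-generic representation of $\GL_{r+1}$), and an integral $I_\mu$ over $S_r\backslash G$ which, for $\Re(s)$ large, converges and defines a nonzero element of $\Hom_G(\pi'_s\widehat\otimes\pi,\C)$. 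Choosing $s$ with $\pi'_s$ irreducible, injectivity of $\mu\mapsto I_\mu$ plus the one-dimensionality of $\Hom_G(\pi'_s\widehat\otimes\pi,\C)$ from \cite[Theorem A]{SZ} gives the bound. The work is then analytic (convergence estimates, majorization of Whittaker functions, an explicit integration formula on $S_r\backslash G$) rather than geometric.

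The genuine gap in your proposal is that its ``technical heart'' --- the stratification of $G\times G_0$ (or the relevant double coset space) by $S_r\times S_r$-orbits and the vanishing of equivariant transversal distributions on every non-open stratum --- is exactly the content of the theorem, and you give no argument for it beyond ``I expect to prove this by\dots''. For $r\geq 1$ the group $S_r$ is not reductive and not spherical in $G$; the double coset space is not finite, the stabilizers on the strata are poorly behaved, and no one has carried out this direct analysis in the archimedean case (the known proofs of the $r=0$ case in \cite{SZ} and \cite{AGRS} already require substantial new ideas, and even in the p-adic case the general Bessel statement is obtained by reduction to the basic case, as in \cite{GGP08}, not by direct orbit analysis). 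Two further points are glossed over: (i) the Gelfand--Kazhdan criterion a priori bounds $\dim\Hom_{S_r}(\pi\widehat\otimes\pi_0,\chi_{S_r})\cdot\dim\Hom_{S_r}(\pi^\vee\widehat\otimes\pi_0^\vee,\chi_{S_r}^{-1})$, so you must also supply an MVW-type identification of $\pi^\vee$ with a conjugate of $\pi$ compatible with $\sigma$ and with the character twist; and (ii) your stated hypothesis usage is off --- the inequalities $n\geq 2r+1$, $p\geq r$, $q\geq r+1$ merely ensure the Bessel subgroup exists, they do not by themselves make the unipotent equivariance kill distributions on bad strata. As written, the proposal is a plausible program statement, not a proof.
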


We would like to make the following remarks on Theorem \ref{main}.
The symbol ``$\widehat{\otimes}$" stands for the completed
projective tensor product of complete, locally convex topological
vector spaces, and ``$\Hom_{S_r}$" stands for the space of
continuous $S_r$-intertwining maps. Note that $\pi_0$ is viewed as a
representation of $S_r$ with the trivial $N_{S_r}$-action. As is
quite common, we do not distinguish a representation with its
underlying space.

Recall that a representation of $G$ is said to be in the class $\CFH$ if it is Fr\'{e}chet, smooth, of moderate
growth, admissible and $\oZ(\gC)$-finite. Here and as usual,
$\oZ(\gC)$ is the center of the universal enveloping algebra
$\oU(\gC)$ of the complexified Lie algebra $\gC$ of $G$. Of course,
the notion of representations in the class $\CFH$ fits all real
reductive groups. The interested reader may consult \cite{Cass} and
\cite[Chapter 11]{W2} for more details.

If $r=0$, then $S_r=G_0$, and Theorem \ref{main} is the multiplicity
one theorem proved by Sun and Zhu in \cite{SZ} (and independently by
Aizenbud and Gourevitch in \cite{AG} for general linear groups). If
$G_0$ is the trivial group, then Theorem \ref{main} asserts
uniqueness of Whittaker models for $\GL_{2r+1}(\R)$,
$\GL_{2r+1}(\C)$, $\oU(r,r+1)$, $\oO(r,r+1)$ and $\oO_{2r+1}(\C)$.
See \cite{Shl74}, \cite{CHM} for local uniqueness of Whittaker
models for quasi-split groups (or \cite{JSZ} for a quick proof).
Hence the family of Bessel models interpolates between the Whittaker
model ($G_0$ is trivial) and the spherical model ($r=0$).

It is a basic problem in representation theory to establish various
models with good properties. In particular, this has important
applications to the classification of representations and to the
theory of automorphic representations.

Whittaker models for representations of quasi-split reductive groups
over complex, real and p-adic fields and their local uniqueness
property are essential to the Langlands-Shahidi method (\cite{Sh88})
and the Rankin-Selberg method (\cite{B05}) to establish the
Langlands conjecture on analytic properties of automorphic
$L$-functions (\cite{GS88}).

The notion of Bessel models originates from classical Bessel
functions and it was first introduced by Novodvorski and
Piatetski-Shapiro (\cite{NPS73}) to study automorphic $L$-functions for
$\Sp(4)$. For orthogonal groups, the Bessel models are
essential to establish analytic properties of automorphic
$L$-functions as considered in \cite{GPSR97}. The analogue for
unitary groups is expected (see \cite{BAS}, for example). More
recently, Bessel models are used in the construction of automorphic
descents from the general linear groups to certain classical groups
(\cite{GRS99}), as well as in the construction of local
descents for supercuspidal representations of p-adic groups
(\cite{JS03}, \cite{S08}, \cite{JNQ08}, and \cite{JNQ09}). Further
applications of Bessel models to the theory of automorphic forms and
automorphic $L$-functions are expected.

We remark that the local uniqueness of the Bessel models is one of
the key properties, which makes applications of these models
possible. An important purpose of this paper is to show that the archimedean
local uniqueness of general Bessel models can be reduced to the uniqueness of the
spherical models proved in \cite{SZ} (i.e. $r=0$ case).  The key idea in this reduction is to construct an
integral $I_{\mu}$ (Equation \eqref{integral} in Section
\ref{defint}), where $\mu$ is a (non-zero) Bessel functional.
We note that for p-adic fields, the reduction to the p-adic spherical models (proved in
\cite{AGRS}) is known by the work of Gan, Gross and Prasad (\cite{GGP08}).  The approach of this paper works for the p-adic
local fields as well.

\vsp We now describe the contents and the organization of this
paper. In Section \ref{pre}, we recall the general set-up of the
Bessel models. In Section \ref{str}, we outline our strategy, and give the proof
of Theorem A, based on two propositions on the aforementioned integral $I_{\mu}$ (Propositions
\ref{nonzero} and \ref{converge}). This integral depends on a complex parameter $s$. Proposition
\ref{nonzero} states that $I_{\mu}$, when evaluated at a certain
point of the domain, is absolutely convergent and nonzero. On the
other hand, Proposition \ref{converge} asserts that $I_{\mu}$
converges absolutely for all points of the domain when the
real part of the parameter $s$ is large, and it defines a
$G$-invariant continuous linear functional on a
representation of $G'\times G$ in the class $\CFH$, where $G'\supset  G$ is one
of the spherical pairs considered in \cite{SZ}.
The proof of Proposition \ref{nonzero} and Proposition \ref{converge} are
given in Sections \ref{pnonzero} and \ref{conv2}, respectively.
Section \ref{int} is devoted to an explicit integral formula
(Proposition \ref{intf}), as a preparation for Section \ref{conv2}.

\vsp \noindent Acknowledgements: the authors would like to thank D.
Barbasch, J. Cogdell, D. Soudry, R. Stanton, and D. Vogan for
helpful conversation and communication. Dihua Jiang is supported in
part by NSF grant DMS-0653742 and by the Chinese Academy of
Sciences. Binyong Sun is supported by NSFC grants 10801126 and
10931006. Chen-Bo Zhu is supported in part by NUS-MOE grant
R-146-000-102-112.

\section{Bessel subgroups and generic characters}\label{pre}

\subsection{Bessel subgroups}\label{bes}

In order to describe the Bessel subgroups uniformly in all five
cases, we introduce the following notations. Let $\K$ be a
$\R$-algebra, equipped with an involution $\tau$. In this article,
$(\K,\tau)$ is assumed to be one of the pairs
\begin{equation}\label{five}
    (\R\times \R,\tau_\R),
   \,\,(\C\times  \C,\tau_\C), \,\, (\C, \overline{\phantom{a}}\,),\,\,(\R, 1_\R),
   \, \,(\C,1_\C),
\end{equation}
where $\tau_\R$ and $\tau_\C$ are the maps which interchange the
coordinates, ``$\overline{\phantom{a}}$" is the complex conjugation,
$1_\R$ and $1_\C$ are the identity maps.

Let $E$ be a hermitian $\K$-module, namely it is a free $\K$-module
of finite rank, equipped with a non-degenerate $\R$-bilinear map
\[
  \la\,,\,\ra_E:E\times E\rightarrow \K
\]
satisfying
\[
     \la u,v\ra_E=\la v,u\ra_E^\tau, \quad \la au,v\ra_E=a\la u,
     v\ra_E,\quad a\in \K,\, u,v\in E.
\]
Denote by $G:=\oU(E)$ the group of all $\K$-module automorphisms of
$E$ which preserve the form $\la\,,\,\ra_E$.

Assume that $E$ is nonzero. Let $r\geq 0$ and
\[
   0=X_0\subset X_1\subset\cdots\subset X_r\subset X_{r+1}
\]
be a flag of $E$ such that
\begin{itemize}
  \item
    $X_i$ is a free $\K$-submodule of $E$ of rank $i$, $i=0,1,\cdots, r, r+1$,
  \item
    $X_r$ is totally isotropic, and
  \item
    $X_{r+1}=X_r\oplus \K v_0'$ (orthogonal direct sum), with  $v_0'$ a non-isotropic vector.
\end{itemize}

A group of the form
\begin{equation}
\label{defBessel}
  S_r:=\{x\in G\mid (x-1)X_{i+1}\subset
  X_i,\,i=0,1,\cdots,r\}
\end{equation}
is called a $r$-th Bessel subgroup of $G$.

\vsp To be more explicit, we fix a totally isotropic free $\K$-submodule $Y_r$ of
\[
  v_0'^\perp:=\{v\in E\mid \la v, v_0'\ra_E=0\}
\]
of rank $r$ so that the pairing
\[
  \la \,,\,\ra_E: X_r\times Y_r\rightarrow \K
\]
is non-degenerate. Write
\[
  E_0:=v_0'^\perp \cap (X_r\oplus Y_r)^\perp.
\]
Then $E$ is decomposed into an orthogonal sum of three submodules:
\begin{equation}\label{Eoriginal}
   E=(X_r\oplus Y_r)\oplus E_0\oplus \K v_0'.
\end{equation}

According to the five cases of $(\K,\tau)$ in (\ref{five}), $G$ is
one of the groups in (\ref{groups}). By scaling the form
$\la\,,\,\ra_E$, we assume that
\[
\la v_0', v_0'\ra_E=-1,
\]
then $G_0:=\oU(E_0)$ is one of the groups in (\ref{groups0}). The
Bessel subgroup $S_r$ is then a semidirect product
\begin{equation}\label{bg}
  S_r=N_{S_r}\rtimes G_0,
\end{equation}
where $N_{S_r}$ is the unipotent radical of $S_r$.

\subsection{Generic characters}\label{gen}

Write
\[
  L_i:=\Hom_\K (X_{i+1}/X_i, X_i/X_{i-1}), \quad i=1,2,\cdots,r,
\]
which is a free $\K$-module of rank $1$. For any $x\in S_r$, $x-1$
obviously induces an element of $L_i$, which is denoted by
$[x-1]_i$. Denote by $[x]_0$ the projection of $x$ to $G_0$. It is
elementary to check that the map
\begin{equation}
\label{deta}
   \begin{array}{rcl}
  \eta_r: S_r&\rightarrow &C_r:=G_0\times L_1\times L_2\times \cdots \times
  L_r,\\
  x&\mapsto& ([x]_0, [x-1]_1, [x-1]_2, \cdots, [x-1]_r)
  \end{array}
\end{equation}
is a surjective homomorphism, and every character on $S_r$ descends
to one on $C_r$. A character on $S_r$ is said to be generic if its
descent to $C_r$ has nontrivial restriction to every nonzero
$\K$-submodule of $L_i$, $i=1,2,\cdots, r$.

\section{The strategy, and proof of Theorem \ref{main}}\label{str}

\subsection{The group $G'$}\label{Gprime}

Introduce
\[
 E':=E\oplus \K v',
\]
with $v'$ a free generator. View it as a hermitian $\K$-module under
the form $\la\,,\,\ra_{E'}$ so that
\[
  \la\,,\,\ra_{E'}|_{E\times E}=\la\,,\,\ra_E,\quad \la
  E,v'\ra_{E'}=0\quad \textrm{and} \quad\la v',v'\ra_{E'}=1.
\]
Then $E'$ is the orthogonal sum of two submodules:
\[
 E'=(X_{r+1}'\oplus Y_{r+1}')\oplus E_0,
\]
where
\[
   X_{r+1}':=X_r\oplus \K(v_0'+v')\quad \textrm{and}\quad Y_{r+1}':=Y_r\oplus
   \K (v_0'-v')
\]
are totally isotropic submodules.

Write $G':=\oU(E')$, which contains $G$ as the subgroup fixing $v'$.
Denote by $P'_{r+1}$ the parabolic subgroup of $G'$ preserving
$X'_{r+1}$, and by $P_r$ the parabolic subgroup of $G$ preserving
$X_r$. As usual, we have
\begin{equation}
\label{dbigP}
   P_{r+1}'= N_{P'_{r+1}}\rtimes (G_0 \times \GL_{r+1}) \subset G'
   \quad\textrm{and}
   \end{equation}
\begin{equation}
\label{dsmallP}
   \quad P_r=N_{P_r}\rtimes (G_0'\times \GL_r)\subset G,
\end{equation}
where $N_{P'_{r+1}}$ and $N_{P_r}$ are the unipotent radicals of
$P'_{r+1}$ and $P_r$, respectively,
\[
  \GL_{r+1}:=\GL_\K(X_{r+1}')\supset \GL_r:=\GL_{\K}(X_r),
\]
and
\[
  G_0':=\oU(E_0')\supset G_0, \quad \textrm{with } E_0':=E_0\oplus \K v_0'.
\]
Write
\[
 N_{r+1}=\{x\in \GL_{r+1}\mid (x-1)X'_{r+1}\subset X_r,\,(x-1)X_i\subset
  X_{i-1},\,i=1,2,\cdots, r\},
\]
and
\[
 N_{r}=\{x\in \GL_{r}\mid (x-1)X_i\subset
  X_{i-1},\,i=1,2,\cdots, r\},
\]
which are maximal unipotent subgroups of $\GL_{r+1}$ and $\GL_r$,
respectively.

\vsp We now describe other salient features of the Bessel group
$S_r$. It is a subgroup of $P_r$:
\begin{equation}
\label{SrPr}
   S_r=N_{P_r}\rtimes (G_0\times N_r)\subset P_r=N_{P_r}\rtimes (G_0'\times \GL_r).
\end{equation}
Although $P_r$ is not a subgroup of $P_{r+1}'$, we have that
$S_r\subset P_{r+1}'$ and the quotient map $P_{r+1}'\rightarrow G_0
\times \GL_{r+1}$ induces a surjective homomorphism
\begin{equation}
\label{deta'}
 \tilde{\eta}_r : S_r \twoheadrightarrow G_0\times N_{r+1}.
\end{equation}
It is elementary to check that every character on $S_r$ descends to
one on $G_0\times N_{r+1}$, and it is generic if and only if its
descent to $G_0\times N_{r+1}$ has generic restriction to $N_{r+1}$,
in the usual sense.

Let $\chi_{S_r}$ be a generic character of $S_r$, as in Theorem
\ref{main}. Write
\begin{equation}\label{gc}
 \chi_{S_r}=(\chi_{G_0}\otimes \psi_{r+1})\circ \tilde{\eta}_r,
\end{equation}
where $\chi_{G_0}$ is a character on $G_0$, and $\psi_{r+1}$ is a
generic character on $N_{r+1}$. Throughout this article, we always
assume that $\psi_{r+1}$ is unitary. Otherwise the Hom space in
Theorem \ref{main} is trivial, due to the moderate growth condition
on the representation $\pi$.

\subsection{Induced representations of $G'$}
\label{subind} Let $\pi_0$ and $\sigma$ be irreducible
representations of $G_0$ and $\GL_{r+1}$ in the class $\CFH$,
respectively. Write
\[
  \rho:=\pi_0\widehat{\otimes}\sigma,
\]
which is a representation of
$G_0\times \GL_{r+1}$ in the class $\CFH$.

Put
\[
   d_\K:=\left\{
            \begin{array}{l}
              1, \quad \textrm{if $\K$ is a field},\\
              2, \quad \textrm{otherwise},\
            \end{array}
        \right.
\]
and
\[
  \K^\times_+=\left\{
            \begin{array}{l}
              \R_+^\times, \quad \quad \quad \ \ \textrm{if $d_\K=1$},\smallskip\\
               \R_+^\times\times  \R_+^\times, \quad \textrm{otherwise}.\
            \end{array}
        \right.
\]
Denote by
\[
  \abs{\cdot}:\K^\times\twoheadrightarrow \K^\times_+
\]
the map of taking componentwise absolute values. For all $a\in
\K^\times_+$ and $s\in \C^{d_\K}$, put
\[
  a^s:=a_1^{s_1}\, a_2^{s_2}\in \C^\times, \quad \textrm{if }\,d_\K=2,\,\, a=(a_1,a_2),\, s=(s_1,s_2).
\]
If $d_\K=1$, $a^s\in \C^\times$ retains the usual meaning.

\vsp

We now define certain representations of $G'$ in the class $\CFH$ which are induced from the parabolic subgroup $P_{r+1}'$.
For every $s\in \C^{d_\K}$, denote by $\pi_s'$ the space of all
smooth functions $f: G'\rightarrow \rho $ such that
\[
   f(n'g m x)=\chi_{G_0}(g)^{-1}\,\abs{\det(m)}^s\,\rho(g m)(f(x)),
\]
for all $n'\in N_{P_{r+1}'}$,  $g\in G_0$,  $m\in \GL_{r+1}$, $x\in
G'$. (We introduce the factor $\chi_{G_0}(g)^{-1}$ for convenience
only.)

By using Langlands classification and the result of Speh-Vogan
\cite[Theorem 1.1]{SV}, we have
\begin{prp}
\label{irr} The representation $\pi'_s$ is irreducible except for a
measure zero set of $s\in \C^{d_\K}$.
\end{prp}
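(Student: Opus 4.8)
The plan is to realize $\pi_s'$ as a parabolically induced representation in a form where the Speh--Vogan irreducibility criterion applies directly. First I would pass from $\pi_0$ and $\sigma$ to their Langlands data: write $\pi_0$ as the Langlands quotient of a standard module $\Ind_{Q_0}^{G_0}(\tau_0\otimes\nu_0)$ and $\sigma$ as the Langlands quotient of $\Ind_{R}^{\GL_{r+1}}(\tau_1\otimes\nu_1)$, where $\tau_0,\tau_1$ are tempered and $\nu_0,\nu_1$ are in the appropriate open positive Weyl chambers. Inducing in stages through $P_{r+1}'$, the representation $\pi_s'$ is then a subquotient — in fact, for generic $s$, the Langlands quotient — of a principal-series-type representation $\Ind_{Q'}^{G'}(\tau\otimes\nu_s)$ of $G'$, where $\tau$ is the tempered representation built from $\tau_0$ and $\tau_1$, and $\nu_s$ is an unramified character of the Levi of $Q'$ depending affinely on $s$ (the $\GL_{r+1}$-part contributes the $\abs{\det(m)}^s$ twist, plus the modular-character shift coming from $N_{P_{r+1}'}$, plus the internal shifts $\nu_0,\nu_1$).

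Next I would invoke \cite[Theorem 1.1]{SV}: for a fixed tempered $\tau$, the induced representation $\Ind_{Q'}^{G'}(\tau\otimes\nu)$ is irreducible for $\nu$ outside a locally finite union of hyperplanes in the (complexified) parameter space — these are the ``reducibility hyperplanes'' determined by the relevant rank-one reducibility data and the root system. The key point is that $s\mapsto\nu_s$ is a complex-affine embedding $\C^{d_\K}\to\a_{Q'}^\ast\otimes\C$, so the preimage of this hyperplane arrangement is either all of $\C^{d_\K}$ or a proper complex-analytic subset, hence of measure zero. To rule out the degenerate case where the image line (or plane, if $d_\K=2$) lies entirely inside one reducibility hyperplane, I would check the behavior as $\Re(s)\to+\infty$: there $\nu_s$ is deep in the positive chamber relative to the $\GL_{r+1}$-block, the standard module is irreducible and equals its own Langlands quotient, so $\pi_s'$ is irreducible for $\Re(s)$ large. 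This single non-vanishing instance forces the affine image to meet the complement of the arrangement, so the exceptional set is a proper analytic subset, and therefore Lebesgue-null. When $d_\K=2$ the same argument applies coordinatewise (the group $G'$ is then a product $\GL\times\GL$ and $\pi_s'$ factors accordingly).

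The main obstacle I anticipate is bookkeeping rather than conceptual: correctly identifying the Levi $Q'$ of $G'$ through which $P_{r+1}'$-induction factors after unwinding the Langlands data of $\pi_0$ and $\sigma$, and tracking all the modular-character and $\rho$-shifts so that the map $s\mapsto\nu_s$ is pinned down precisely enough to see it is a non-constant affine map. Once that is in place, the measure-zero conclusion is immediate from the local finiteness of the reducibility arrangement in Speh--Vogan together with the large-$\Re(s)$ irreducibility. One should also note that \cite[Theorem 1.1]{SV} is stated for real reductive groups in Harish-Chandra's class, which covers all five cases of $G'$ arising here (including the complex and the $\GL\times\GL$ cases), so no separate argument is needed per case.
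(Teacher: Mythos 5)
Your overall route is exactly the one the paper has in mind: the paper offers no argument beyond the sentence ``by using Langlands classification and the result of Speh--Vogan \cite[Theorem 1.1]{SV}'', and unwinding that citation means precisely what you describe --- realize $\pi_0$ and $\sigma$ as Langlands quotients of standard modules, induce in stages so that $\pi'_s$ becomes a nonzero quotient of $\Ind_{Q'}^{G'}(\tau\otimes\nu_s)$ (hence irreducible whenever the ambient induced module is), and observe that the Speh--Vogan reducibility locus is contained in a locally finite affine hyperplane arrangement whose pullback under the affine map $s\mapsto\nu_s$ should be Lebesgue-null.

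However, your mechanism for excluding the degenerate case (the affine image contained in a reducibility hyperplane) does not work as stated, and this is a genuine gap. First, the claim that the standard module is irreducible for $\Re(s)$ large is false: the reducibility hyperplanes (integrality conditions of the type $\langle\nu,\alpha^{\vee}\rangle\in\Z$) are unbounded, and already for $\SL_2(\R)$ the principal series is reducible at infinitely many parameters of arbitrarily large real part; deep positivity only guarantees a unique irreducible quotient, not irreducibility. Second, even granting irreducibility at a single point $s_0$, this does not force $\nu_{s_0}$ to lie off the arrangement, since lying on a hyperplane is only a necessary condition for reducibility; so one irreducible instance does not show that the affine image meets the complement of the arrangement. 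The correct way to rule out the degenerate case is structural: the hyperplanes whose defining functionals annihilate the $s$-direction are exactly those attached to roots inside the Levi $G_0\times\GL_{r+1}$ of $P'_{r+1}$, and these are irrelevant because one induces the already irreducible $\pi_0\widehat{\otimes}(\sigma\otimes\abs{\det}^s)$ rather than its standard module; the conditions that actually govern reducibility of $\pi'_s$ are attached to roots outside this Levi, and every such root pairs nontrivially with the $\abs{\det}^s$-direction(s), so each corresponding hyperplane pulls back to a proper affine subspace of $\C^{d_\K}$. With that substitution in place of your large-$\Re(s)$ step, the argument is complete.
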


\subsection{The integral $I_{\mu}$} \label{defint}

Recall that $\psi_{r+1}$ is the generic unitary character of
$N_{r+1}$ as in (\ref{gc}). Assume that the representation $\sigma$
of $\GL_{r+1}$ is $\psi_{r+1}^{-1}$-generic, namely there exists a
nonzero continuous linear functional
\[
  \lambda: \sigma\rightarrow \C
\]
such that
\[
  \lambda(\sigma(m)u)= \psi_{r+1}(m)^{-1} \,\lambda(u), \quad m\in
  N_{r+1}, \,u\in \sigma.
\]
We fix one such $\lambda$. Define a continuous
linear map $\Lambda$ by the formula
\[
  \begin{array}{rcl}
  \Lambda: \rho=\pi_0\widehat{\otimes} \sigma&\rightarrow & \pi_0\\
   u\otimes v&\mapsto& \lambda(v) u.
  \end{array}
\]

Let $\pi$ be an irreducible representation of
$G$ in the class $\CFH$, as in Theorem \ref{main}, and let
\[
  \mu: \pi\times \pi_0\rightarrow \C
\]
be a Bessel functional, namely a continuous bilinear map which
corresponds to an element of
\[
  \Hom_{S_r}(\pi\widehat{\otimes} \pi_0,
  \chi_{S_r}).
\]

\begin{lemp}\label{srinv}
For every $s\in\C^{d_\K}$, $u\in \pi$ and $f\in \pi'_s$, the smooth
function
\[
  g\mapsto  \mu(\pi(g)u,\,\Lambda(f(g)))
\]
on $G$ is left invariant under $S_r$.
\end{lemp}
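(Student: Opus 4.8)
The plan is to verify directly that for $h\in S_r$, the value of the function at $hg$ equals its value at $g$. Recall from \eqref{SrPr} that $S_r\subset P_r\subset G$, and from \eqref{deta'} that $\tilde\eta_r$ realizes $S_r$ as an extension with quotient $G_0\times N_{r+1}$; the first task is to record how $h\in S_r$ acts in the two relevant ways simultaneously: as an element of $G$ on the left argument $\pi(g)u$, and---via the inclusion $S_r\subset P'_{r+1}\subset G'$ of the previous subsection---inside the defining formula for $f\in\pi'_s$. Concretely, writing $h\in S_r\subset P'_{r+1}$, decompose its image $\tilde\eta_r(h)=(g_h,m_h)\in G_0\times N_{r+1}\subset G_0\times\GL_{r+1}$; then the defining transformation law for $\pi'_s$ gives
\[
 f(hg)=\chi_{G_0}(g_h)^{-1}\,\abs{\det(m_h)}^{s}\,\rho(g_h m_h)\bigl(f(g)\bigr)
      =\chi_{G_0}(g_h)^{-1}\,\rho(g_h m_h)\bigl(f(g)\bigr),
\]
using $m_h\in N_{r+1}$ so $\det(m_h)=1$.

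Next I would push $f(hg)$ through $\Lambda$. Since $\rho=\pi_0\widehat\otimes\sigma$ and $g_h$ acts only on the $\pi_0$-factor while $m_h$ acts only on the $\sigma$-factor, on pure tensors $u_0\otimes v$ one has $\Lambda(\rho(g_h m_h)(u_0\otimes v))=\lambda(\sigma(m_h)v)\,\pi_0(g_h)u_0=\psi_{r+1}(m_h)^{-1}\lambda(v)\,\pi_0(g_h)u_0=\psi_{r+1}(m_h)^{-1}\,\pi_0(g_h)\bigl(\Lambda(u_0\otimes v)\bigr)$, by the $\psi_{r+1}^{-1}$-equivariance of $\lambda$; by continuity and density this gives the identity $\Lambda(\rho(g_h m_h)w)=\psi_{r+1}(m_h)^{-1}\,\pi_0(g_h)\,\Lambda(w)$ for all $w\in\rho$. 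Hence
\[
 \Lambda(f(hg))=\chi_{G_0}(g_h)^{-1}\,\psi_{r+1}(m_h)^{-1}\,\pi_0(g_h)\bigl(\Lambda(f(g))\bigr).
\]
Meanwhile $\pi(hg)u=\pi(h)\pi(g)u$. Feeding both into the bilinear form $\mu$ and using that $\mu$ lies in $\Hom_{S_r}(\pi\widehat\otimes\pi_0,\chi_{S_r})$---so that $\mu(\pi(h)x,\pi_0(g_h)y)=\chi_{S_r}(h)\,\mu(x,y)$, where $\pi_0$ is regarded as an $S_r$-representation through $[{\,\cdot\,}]_0$---we obtain
\[
 \mu(\pi(hg)u,\Lambda(f(hg)))=\chi_{G_0}(g_h)^{-1}\psi_{r+1}(m_h)^{-1}\,\chi_{S_r}(h)\,\mu(\pi(g)u,\Lambda(f(g))).
\]

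The proof then closes by the bookkeeping identity $\chi_{S_r}(h)=\chi_{G_0}(g_h)\,\psi_{r+1}(m_h)$, which is exactly the content of \eqref{gc}: $\chi_{S_r}=(\chi_{G_0}\otimes\psi_{r+1})\circ\tilde\eta_r$ and $\tilde\eta_r(h)=(g_h,m_h)$. Thus the three scalar factors cancel and $\mu(\pi(hg)u,\Lambda(f(hg)))=\mu(\pi(g)u,\Lambda(f(g)))$, which is the desired left $S_r$-invariance. I expect the only genuine subtlety---the ``hard part''---to be the careful identification of the two a priori different ways $h\in S_r$ enters: its action in $G$ on the $\pi$-side versus its image under $S_r\hookrightarrow P'_{r+1}\twoheadrightarrow G_0\times\GL_{r+1}$ inside the induced-representation formula, and checking that the $G_0$-components agree (both equal $[h]_0$) and the $\GL_{r+1}$-component indeed lands in $N_{r+1}$ so that the $\abs{\det}^s$ factor is trivial and $\psi_{r+1}$ applies. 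Everything else is the formal equivariance of $\lambda$, $\Lambda$, and $\mu$ together with the defining relation \eqref{gc}; smoothness of the function is immediate since $f$ is smooth, $\pi$ is a smooth representation, and $\mu,\Lambda$ are continuous.
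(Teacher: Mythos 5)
Your proposal is correct and follows essentially the same route as the paper's proof: decompose $h\in S_r\subset P'_{r+1}$ via $N_{P'_{r+1}}\rtimes(G_0\times N_{r+1})$, apply the transformation law for $\pi'_s$ (with $\abs{\det}^s$ trivial on the unipotent part), push through $\Lambda$ using the $\psi_{r+1}^{-1}$-equivariance of $\lambda$, and cancel the scalars against $\chi_{S_r}(h)$ via \eqref{gc} and the $\chi_{S_r}$-equivariance of $\mu$. The paper writes the cancellation as $\Lambda(f(bx))=\chi_{S_r}(b)^{-1}\pi_0(g)(\Lambda(f(x)))$ rather than keeping $\chi_{G_0}$ and $\psi_{r+1}$ separate, but this is only a cosmetic difference.
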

\begin{proof}
Let $x\in G$ and $b\in S_r\subset P_{r+1}'$. Write
\[
  b=n'gm,\quad n'\in N_{P_{r+1}'}, \,g\in G_0, \,m\in N_{r+1}.
\]
Then
\begin{eqnarray*}
   \Lambda(f(bx)))&=&\chi_{G_0}(g)^{-1}\Lambda(\rho(gm)(f(x)))\\
   &=&\chi_{G_0}(g)^{-1}\psi_{r+1}(m)^{-1}\pi_0(g)(\Lambda(f(x)))\\
   &=&\chi_{S_r}(b)^{-1}\,\pi_0(g)(\Lambda(f(x))),\\
\end{eqnarray*}
and therefore,
\begin{eqnarray*}
  \mu(\pi(bx)u,\,\Lambda(f(bx)))
  &=&\chi_{S_r}(b)^{-1}\mu(\pi(b)\pi(x)u,\,\pi_0(g)(\Lambda(f(x))))\\
     &=&\mu(\pi(x)u,\,\Lambda(f(x))).\\
   \end{eqnarray*}
The last equality holds as $b$ maps to $g$ under the quotient map
$S_r\twoheadrightarrow G_0$, and $\pi _0$ is viewed as a
representation of $S_r$ via inflation.
\end{proof}

Write
\begin{equation}\label{integral}
  I_\mu(f,u):=\int_{S_r\backslash G} \mu(\pi(g)u,\,\Lambda(f(g)))
  \,dg,\quad f\in \pi'_s, \,u\in\pi,
\end{equation}
where $dg$ is a right $G$-invariant positive measure on
$S_r\backslash G$. It is clear that
\[
  I_\mu(\pi_s'(g)f, \pi(g)u)=I_\mu(f, u)
\]
for all $g\in G$ whenever the integrals converge absolutely.

\subsection{Proof of Theorem \ref{main}}\label{pro}

We shall postpone the proof of the following proposition to Section
\ref{pnonzero}.

\begin{prpp}\label{nonzero}
If $\mu\neq 0$, then there is an element $f_\rho\in \pi'_s$ and a
vector $u_\pi \in \pi$ such that the integral
$I_\mu(f_\rho,u_\pi)$ converges absolutely, and yields a nonzero
number.
\end{prpp}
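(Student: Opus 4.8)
The plan is to choose $u_\pi$ and $f_\rho$ supported so tightly near the identity coset in $S_r\backslash G$ that the integral \eqref{integral} degenerates to a single evaluation of $\mu$, composed with the generic functional $\lambda$. Concretely, I would first pick any $u_\pi\in\pi$ and any vector $v\in\sigma$ with $\lambda(v)\neq 0$ and $w\in\pi_0$ with $\mu(u_\pi,w)\neq 0$; such vectors exist because $\mu\neq 0$ (so the bilinear form $\mu$ is not identically zero, and by continuity we may perturb either slot) and because $\sigma$ is $\psi_{r+1}^{-1}$-generic. Then I would construct $f_\rho\in\pi'_s$ by picking a bump: since $\pi'_s$ is a smoothly induced representation from $P'_{r+1}$, and $S_r\backslash G$ embeds as a locally closed submanifold of $P'_{r+1}\backslash G'$ (this is the content of \eqref{deta'}, together with the fact that $S_r=G\cap P'_{r+1}$ acts on $G$ with the right quotient structure), I can prescribe $f_\rho$ to be supported in an arbitrarily small open neighborhood of the base coset, with $f_\rho(1)=w\otimes v$ up to the normalizing character $\chi_{G_0}$.

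Second, I would check convergence. On the support of $f_\rho$, the integrand $g\mapsto\mu(\pi(g)u_\pi,\Lambda(f_\rho(g)))$ is a compactly supported smooth function on $S_r\backslash G$ by construction—Lemma \ref{srinv} guarantees it genuinely descends to a well-defined function on the quotient—so absolute convergence of $I_\mu(f_\rho,u_\pi)$ is immediate regardless of the value of $s$. This is the easy half; it does not require Proposition \ref{converge}.

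Third, and this is where the real work lies, I would arrange nonvanishing. Having chosen the support of $f_\rho$ to be a small neighborhood $U$ of the identity coset, I would like to say $I_\mu(f_\rho,u_\pi)\approx \mathrm{vol}(U)\cdot\mu(u_\pi,\Lambda(f_\rho(1)))=\mathrm{vol}(U)\cdot\lambda(v)\,\mu(u_\pi,w)\neq 0$. To make this rigorous I would take a sequence (or net) of bump functions $f_\rho^{(k)}$ with shrinking supports $U_k$ and the same value at the origin, renormalize by $\mathrm{vol}(U_k)^{-1}$, and show the renormalized integrals converge to $\mu(u_\pi,\Lambda(f_\rho(1)))$ by continuity of $\mu$, of $\Lambda$, and of the $G$-actions on $\pi$ and on the sections; this is a standard approximate-identity argument once one knows the integrand is continuous, which follows from smoothness of everything in sight. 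For $k$ large the integral is therefore nonzero, and I fix that $f_\rho=f_\rho^{(k)}$. The one genuine subtlety—the main obstacle—is verifying that $S_r\backslash G$ really does sit inside $P'_{r+1}\backslash G'$ as a submanifold through which $f_\rho$ can be freely prescribed near the basepoint: one must check that the intersection $S_r = G\cap P'_{r+1}$ is exactly the stabilizer structure claimed by \eqref{deta'}, so that restricting a section of $\mathrm{Ind}_{P'_{r+1}}^{G'}\rho$ to $G$ gives precisely functions on $G$ transforming on the left under $S_r$ by $\chi_{S_r}$ against $\rho$, with no extra constraint obstructing a compactly supported choice. Granting that—which is essentially the local structure recorded in Section \ref{Gprime}—the argument closes.
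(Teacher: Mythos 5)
Your construction breaks at the point you yourself flag as ``the one genuine subtlety'': the identity $S_r=G\cap P'_{r+1}$ is false for $r\geq 1$. By \eqref{intgroup}, $G\cap P'_{r+1}=N_{P_r}\rtimes(G_0\times \GL_r)$, whereas $S_r=N_{P_r}\rtimes(G_0\times N_r)$, so the natural map $S_r\backslash G\to P'_{r+1}\backslash G'$ has noncompact fibers isomorphic to $N_r\backslash \GL_r$. Consequently, shrinking the support of $f_\rho$ in $P'_{r+1}\backslash G'$ does \emph{not} make the integrand compactly supported on $S_r\backslash G$: whenever $f_\rho(1)=w\otimes v\neq 0$, the left transformation law under $\GL_r\subset\GL_{r+1}$ forces the integrand to spread over the entire fiber $N_r\backslash \GL_r$, weighted by the Whittaker function $m\mapsto \lambda(\sigma(m)v)$ (times $\abs{\det m}^s\delta_{P_r}^{-1}(m)$ and a matrix coefficient of $\pi$). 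For an arbitrary $v$ with $\lambda(v)\neq 0$ this Whittaker function is not compactly supported modulo $N_r$, so your ``immediate'' absolute convergence fails, and the approximate-identity argument fails with it: the localization only controls the $\bar N_{P_r}$-direction, while the integrals over $N_r\backslash\GL_r$ and $G_0\backslash G_0'$ remain genuinely infinite-extent integrals that could vanish by cancellation. (Your argument is essentially correct only in the degenerate case $r=0$.)

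The paper repairs exactly these two defects. It parametrizes $S_r\backslash G$ (up to measure zero) by $(N_r\backslash\GL_r)\times(G_0\backslash G_0')\times\bar N_{P_r}$ via the open embedding $\iota_{G'}$, and then: (i) invokes the Jacquet--Shalika result (Lemma \ref{usigma}) to choose $u_\sigma\in\sigma$ whose Whittaker function $W_r(m)=\lambda(\sigma(m)u_\sigma)$ is compactly supported modulo $N_r$, which is what actually produces a compactly supported integrand in the $\GL_r$-direction; (ii) proves a separate nonvanishing statement on $G_0\backslash G_0'$ (Lemma \ref{upi}), using a positivity trick --- choose $f_{\pi_0}$ supported where $\Re\,\mu(\pi(a)u_\pi,u_{\pi_0})>0$ --- to guarantee the inner integral over $G_0\backslash G_0'$ is nonzero; and only then (iii) runs your bump-function idea in the remaining variables $(m,\bar n)$ to achieve \eqref{intnonzero}. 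You would need to incorporate steps (i) and (ii); without them the proposal does not close.
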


Denote by
\[
  \Re: \C^{d_\K}\rightarrow \R^{d_\K}
\]
the map of taking real parts componentwise. If $a\in \R^{d_\K}$ and
$c\in \R$, by writing $a>c$, we mean that each component of $a$ is
$>c$.

The proof of the following proposition will be given in Section
\ref{conv2} after preparation in Section \ref{int}.

\begin{prpp}\label{converge}
There is a real number $c_\mu$ such that for all $s\in \C^{d_\K}$
with $\Re(s)>c_\mu$, the integral $I_\mu(f,u)$ converges absolutely
for all $f\in \pi'_s$ and all $u\in \pi$, and $I_\mu$ defines a
continuous linear functional on $\pi'_s\widehat \otimes \pi$.
\end{prpp}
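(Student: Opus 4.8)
\textbf{Proof strategy for Proposition \ref{converge}.}

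The plan is to bound the integrand $|\mu(\pi(g)u,\Lambda(f(g)))|$ by an explicit continuous seminorm on $\pi'_s \widehat\otimes \pi$ times an integrable function on $S_r\backslash G$, the integrability holding once $\Re(s)$ is large. The starting point is the Iwasawa-type decomposition $G = P_r K$ for a maximal compact $K\subset G$, which combined with \eqref{SrPr} (i.e. $S_r = N_{P_r}\rtimes(G_0\times N_r)$ inside $P_r = N_{P_r}\rtimes(G_0'\times\GL_r)$) gives a set of representatives for $S_r\backslash G$ essentially parametrized by $(N_r\backslash \GL_r)\times (G_0\backslash G_0') \times K$, i.e. by the torus/Levi directions transverse to $S_r$ in $P_r$ together with the compact part. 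First I would set up coordinates on $S_r\backslash G$ and compute the modular/Jacobian factors, so that the invariant measure $dg$ becomes an explicit measure on these coordinates.

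Next, the key estimate. On the compact factor $K$ everything is uniformly bounded, so the whole matter reduces to the behaviour in the Levi directions: a torus element $t = \diag(t_1,\dots,t_{r+1})$-type coordinate coming from $\GL_{r+1} = \GL_\K(X'_{r+1})$ (using the embedding $S_r \hookrightarrow P'_{r+1}$ of \eqref{deta'} and the formula $f(n'gmx) = \chi_{G_0}(g)^{-1}|\det m|^s\,\rho(gm)(f(x))$ defining $\pi'_s$). Evaluating $f$ along such a $t$ produces the factor $|\det t|^s$ times a vector of the form $\rho(t)f(k)$; applying $\Lambda$ and the genericity of $\lambda$ for $\psi_{r+1}$, together with the fact that $\pi_0$ and $\sigma$ are of moderate growth, gives a bound on $|\mu(\pi(t k)u,\Lambda(f(tk)))|$ by a continuous seminorm of $(f,u)$ times $|\det t|^{\Re(s)}$ times a function of $t$ with at most polynomial growth in $\log|t_i|$ and the moderate-growth bound coming from $\mu$ acting through $\pi(t)$. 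The point is that as $\Re(s_j)\to\infty$ the factor $|\det t|^{\Re(s)}$ decays (on the relevant chamber, where the $|t_i|$ are bounded away from the directions in $N_r$ that have been quotiented out) fast enough to dominate all the polynomial and moderate-growth contributions, and the remaining integral over the positive-torus coordinates converges. I would package the moderate-growth bound on $\mu$ itself using the standard fact (Casselman--Wallach theory, \cite{Cass}, \cite[Ch.~11]{W2}) that a continuous functional on a $\CFH$-representation is bounded by a Sobolev-type seminorm, so that $|\mu(\pi(g)u, w)| \ll \|g\|^{N}\,\nu(u)\,\nu'(w)$ for some $N$ and continuous seminorms $\nu,\nu'$; the power $\|g\|^N$ is then absorbed into the choice of $c_\mu$.

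Finally, once the integrand is dominated by (continuous seminorm of $(f,u)$) $\times$ (fixed $L^1$ function on $S_r\backslash G$) uniformly for $\Re(s) > c_\mu$, absolute convergence of $I_\mu(f,u)$ is immediate, and the resulting bound $|I_\mu(f,u)| \le C\,\nu(f)\,\nu'(u)$ shows $I_\mu$ extends to a continuous linear functional on the projective tensor product $\pi'_s\widehat\otimes\pi$. The $G$-invariance is the formal computation already noted after \eqref{integral}, valid wherever the integrals converge absolutely. The main obstacle I anticipate is the bookkeeping in the second step: getting an honest, uniform-in-$s$ bound on the integrand requires carefully matching the $|\det m|^s$ twist in the definition of $\pi'_s$ against the modulus character of $P_r$ appearing in $dg$ and against the moderate-growth exponent of $\mu$, and checking that the chamber of torus coordinates actually occurring (after quotienting by $N_r$ and $G_0$) is exactly the one on which $|\det t|^{\Re(s)}$ decays — i.e. that no ``wrong direction'' survives in which the $s$-twist blows up. This is where the explicit integral formula of Section \ref{int} (Proposition \ref{intf}) is needed, and I would lean on it to make the change of variables and the convergence estimate completely explicit.
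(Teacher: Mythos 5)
Your overall setup is correct and matches the paper's: use the integral formula of Proposition \ref{intf} to coordinatize $S_r\backslash G$ by torus directions plus a compact factor, bound $\mu$ and the representation actions by moderate-growth estimates, and make the $|\det|^s$-twist absorb the polynomial factors. The $G$-invariance and continuity parts at the end are also routine once absolute convergence is established. However, there is a genuine gap in your second step, and you have in fact flagged it yourself without realizing it is fatal to the argument as you have set it up.

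You write that after quotienting by $N_r$, the torus coordinates lie in a chamber ``bounded away from the directions in $N_r$'' on which $|\det t|^{\Re(s)}$ decays. This is not the case: $N_r\backslash\GL_r$ carries the \emph{full} positive torus $(\K^\times_+)^r$ (not a cone), and after passing to simple-root coordinates $\alpha_j=t_j/t_{j+1}$ the factor $|\det|^{\Re(s)}$ becomes $\prod_j\alpha_j^{j\Re(s)}$, which decays as $\alpha_j\to 0$ but blows up as $\alpha_j\to\infty$. Increasing $\Re(s)$ makes the bad direction worse, not better, so ``taking $\Re(s)$ large'' cannot by itself give convergence, and the moderate-growth bound $\|g\|^{c_\pi}$ coming from $\mu$ (which is polynomial in $|t_i|$, not in $\log|t_i|$ as you suggest) only compounds the problem. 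The ingredient you are missing is Jacquet's majorization of Whittaker functionals (the paper's Lemma \ref{whittaker}, following \cite[Proposition 3.1]{Jac}): because $\lambda$ transforms by the \emph{generic} character $\psi_{r+1}^{-1}$ of $N_{r+1}$, one may differentiate by Lie algebra elements $Y_i$ with $\Ad_{a_{\tilde\bt}}Y_i=(t_i/t_{i+1})Y_i$ and $\psi_{r+1}(Y_i)\neq 0$ to gain a factor $\xi(\tilde\bt)^{-N}=\prod_i(1+t_i/t_{i+1})^{-N}$ for every $N$, at the cost of passing to a higher Sobolev seminorm on $\rho$. This provides \emph{rapid decay} (faster than any polynomial) precisely in the directions $\alpha_j\to\infty$ where $|\det|^{\Re(s)}$ blows up. Only with this two-sided control — $|\det|^{\Re(s)}$ killing the $\alpha_j\to 0$ end for $\Re(s)$ large, and $\xi^{-N}$ killing the $\alpha_j\to\infty$ end for $N$ large — does the integral in Lemma \ref{cJ} converge. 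Your ``moderate growth of $\sigma$'' gives at best a polynomial bound and is strictly weaker than what is needed; genericity of $\psi_{r+1}$ is not just a passing hypothesis but the engine of the estimate. As written, your argument would fail at exactly the ``main obstacle'' you anticipated.
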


\vsp We now complete the proof of Theorem \ref{main}. We are given
$\pi, \pi _0$ and a generic character $\chi_{S_r}$ of $S_r$ as in
Equation \eqref{gc}. As noted there, we may assume that the generic
character $\psi_{r+1}$ of $N_{r+1}$ is unitary. As is well known,
there exists an irreducible representation
$\sigma $ of $\GL_{r+1}$ in the class $\CFH$ which is $\psi_{r+1}^{-1}$-generic (it follows from
\cite[Theorem 9.1]{CHM}, for example). For each $\mu \in
\Hom_{S_r}(\pi\widehat{\otimes} \pi_0,\chi_{S_r})$, we may therefore
define the integral $I_{\mu}$, as in Section \ref{defint}.

Let $F_r$ be a finite dimensional subspace of
$\Hom_{S_r}(\pi\widehat{\otimes} \pi_0,\chi_{S_r})$.  By Proposition
\ref{converge}, there exists a real number $c_{F_r}$ such that for
all $\mu \in F_r$ and all $s\in \C^{d_\K}$ with $\Re(s)>c_{F_r}$,
the integral $I_\mu(f,u)$ converges absolutely for all $f\in \pi'_s$
and all $u\in \pi$, and defines a continuous linear functional on
$\pi'_s\widehat \otimes \pi$.

By Proposition \ref{irr}, we may choose one $s$ with
$\Re(s)>c_{F_r}$ and $\pi'_s$ irreducible. By Proposition
\ref{nonzero}, we have a linear embedding
\[
  F_r \hookrightarrow \Hom_G(\pi_s'\widehat{\otimes}
  \pi,\C), \quad \mu\mapsto I_\mu.
\]
The later space is at most one dimensional by \cite[Theorem A]{SZ},
and so is $F_r$. This proves Theorem \ref{main}. \qed

\section{Proof of Proposition \ref{nonzero}}
\label{pnonzero}

We continue with the notation of the last section and assume that
$\mu\neq 0$. By absorbing the concerned characters into the
representations $\pi_0$ and $\sigma$, we may and we will assume in
this section that $\chi_{G_0}=1$ and $s=0\in \C^{d_\K}$.

\vsp Let $\bar{N}_{P_r}$ be the unipotent subgroup of $G$ which is
normalized by $G_0'\times \GL_r $ so that $(G_0'\times \GL_r )
\bar{N}_{P_r}$ is a parabolic subgroup opposite to $P_r$. Then
\[P_r\bar{N}_{P_r} \text{ is open in $G$}.\]

Recall that the Bessel group $S_r$ is a subgroup of $P_r$:
\[
   S_r=N_{P_r}\rtimes (G_0\times N_r)\subset P_r=N_{P_r}\rtimes (G_0'\times \GL_r).
\]

We shall need to integrate over $S_r\backslash G$, and thus over the
following product space
\[N_r\backslash \GL_r\times (G_0\backslash G_0')\times
\bar{N}_{P_r}.\]

\subsection{A nonvanishing lemma on $G_0\backslash G_0'$}

\begin{lem}\label{upi}
There is a vector $u_\pi\in \pi$ and a smooth function $f_{\pi_0}:
G_0'\rightarrow \pi_0$, compactly supported modulo $G_0$ such that
\[
  f_{\pi_0}(gg')=\pi_0(g)f_{\pi_0}(g'), \quad g\in G_0, \,g'\in G_0',
\]
and
\begin{equation}\label{nonzero1}
   \int_{G_0\backslash G_0'} \mu(\pi(g') u_\pi, f_{\pi_0}(g'))\ dg'\neq 0.
\end{equation}
\end{lem}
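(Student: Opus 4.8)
The plan is to reduce Lemma~\ref{upi} to the non-vanishing of the Bessel functional $\mu$ itself. Since $\mu\neq 0$, there exist $u_\pi\in\pi$ and $v_0\in\pi_0$ with $\mu(u_\pi,v_0)\neq 0$; the task is to ``smear'' $v_0$ over $G_0\backslash G_0'$ in a way that preserves non-vanishing of the integral \eqref{nonzero1}. First I would recall that $\pi_0$, being an irreducible Casselman--Wallach representation of $G_0$, carries a continuous action; for any fixed compactly-supported (modulo $G_0$) smooth section $f_{\pi_0}\colon G_0'\to\pi_0$ satisfying the equivariance $f_{\pi_0}(gg')=\pi_0(g)f_{\pi_0}(g')$, the function $g'\mapsto \mu(\pi(g')u_\pi, f_{\pi_0}(g'))$ is a well-defined smooth function on $G_0\backslash G_0'$ (using that $b\in S_r$ acts on $\pi_0$ through its projection to $G_0$, exactly as in Lemma~\ref{srinv}), and the integral \eqref{nonzero1} makes sense because the integrand is compactly supported.

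The key step is an approximate-identity argument near the identity coset. Choose a small neighborhood $V$ of $1$ in $G_0'$ such that for $g'\in V$ we have $\mu(\pi(g')u_\pi, \pi_0(h)v_0)$ close to $\mu(u_\pi,v_0)\neq 0$ for $h$ ranging in a compact set — here one uses joint continuity of the bilinear form $\mu$ together with continuity of the representations $\pi$ and $\pi_0$. Then take $f_{\pi_0}$ to be supported on $G_0 V$, of the form $f_{\pi_0}(g'){}={}\phi(g')\,\pi_0(\text{(left }G_0\text{-part of }g'))v_0$ for a suitable nonnegative bump function $\phi$ on $G_0\backslash G_0'$ supported in the image of $V$ and of total mass $1$; more carefully, fix a smooth section $G_0\backslash G_0'\supset \bar V\to G_0'$ near the base point and define $f_{\pi_0}$ by $G_0$-equivariant extension of $g'\mapsto\phi(g')v_0$ off that section. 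With this choice, \eqref{nonzero1} becomes an average of numbers all lying in a small ball around $\mu(u_\pi,v_0)$, hence is nonzero.

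The main obstacle I anticipate is purely bookkeeping: making the equivariant section $f_{\pi_0}$ genuinely smooth and compactly supported modulo $G_0$, rather than merely measurable, and checking that the resulting integrand on $G_0\backslash G_0'$ is smooth so that later sections (which differentiate or estimate $I_\mu$) can use it. This requires choosing a smooth local cross-section of $G_0\to G_0'\to G_0\backslash G_0'$ near the identity coset — available since $G_0$ is a closed reductive subgroup — and transporting a scalar bump function along it, then extending by the $G_0$-action; one must verify the two pieces glue smoothly, which is standard for the principal $G_0$-bundle $G_0'\to G_0\backslash G_0'$. A secondary, minor point is to confirm that the ``$\chi_{G_0}=1$, $s=0$'' normalization made at the start of Section~\ref{pnonzero} is harmless here, which it is, since that normalization only rescales $\mu$ and $\pi_0$ by characters and does not affect non-vanishing.
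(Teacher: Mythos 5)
Your proposal is correct and follows essentially the same route as the paper: the authors also pick $u_\pi, u_{\pi_0}$ with $\mu(u_\pi,u_{\pi_0})=1$, choose a small local cross-section $A'$ of $G_0\backslash G_0'$ near the identity on which $\Re\bigl(\mu(\pi(a)u_\pi,u_{\pi_0})\bigr)>0$, and set $f_{\pi_0}(ga)=\phi_0(a)\,\pi_0(g)u_{\pi_0}$ for a bump function $\phi_0$ on $A'$, so that the integral has positive real part. Your extra remarks on smooth gluing over the principal bundle and on the harmlessness of the $\chi_{G_0}=1$, $s=0$ normalization are fine and do not change the argument.
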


\begin{proof}
Pick $u_\pi\in \pi$ and $u_{\pi_0}\in \pi_0$ so that
\[
  \mu(u_{\pi}, u_{\pi_0})=1.
\]
Let $A'$ be a submanifold of $G_0'$ such that the multiplication map
$G_0\times A'\rightarrow G_0'$ is an open embedding, and
\[
  \Re (\mu(\pi(a) u_{\pi}, u_{\pi_0}))>0, \quad a\in A'.
\]
Let $\phi_0$ be a compactly supported nonnegative and nonzero smooth
function on $A'$. Put
\[
  f_{\pi_0}(g'):=\left\{
                     \begin{array}{ll}
                       \phi_0(a) \pi_0(g) u_{\pi_0},\quad& \textrm{if }g'=g a\in G_0 A',\\
                       0, \quad&\textrm{otherwise},
                      \end{array}
                  \right.
\]
which clearly fulfills all the desired requirements.

\end{proof}

\subsection{Whittaker functions on $\GL_r$}

Fix $u_\pi$ and $f_{\pi_0}$ as in Lemma \ref{upi}. Set
\[
  \Phi(m, \bar n):=\int_{G_0\backslash G_0'} \mu(\pi(mg'\bar{n}) u_{\pi}, f_{\pi_0}(g'))\
  dg',
\]
which is a smooth function on $\GL_r\times \bar{N}_{P_r}$. It is
nonzero since $\Phi(1,1)\neq 0$ by (\ref{nonzero1}). Note that
$\mu$ is $\chi_{S_r}$-equivariant, the representation $\pi_0$ of
$S_r$ has trivial restriction to $N_r$, and $\chi_{S_r}$ and
$\psi_{r+1}$ have the same restriction to $N_r$. Therefore, we
have
\[
   \Phi(lm,\bar n)=\psi_{r+1}(l)\Phi(m, \bar n), \quad l\in N_{r}, \, m\in
   \GL_r,\, \bar n\in\bar{N}_{P_r}.
\]

Let $W_r$ be a smooth function on $\GL_r$ with compact support
modulo $N_r$ such that
\begin{equation}\label{whe}
  W_r(lm)=\psi_{r+1}(l)^{-1} W_r(m), \quad l\in N_r, \, m\in \GL_r.
\end{equation}

The following lemma is due to Jacquet and Shalika (\cite[Section
3]{JS}, see also \cite[Section 4]{Cog}).

\begin{leml}\label{usigma}
For every $W_r$ as above, there is a vector $u_\sigma\in \sigma$
such that
\[
  W_r(m)=\lambda(\sigma(m)u_\sigma), \quad m\in \GL_r.
\]
\end{leml}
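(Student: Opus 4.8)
The statement is equivalent to the inclusion $\mathcal{C}\subseteq\mathcal{W}(\sigma,\psi_{r+1}^{-1})$, where $\mathcal{W}(\sigma,\psi_{r+1}^{-1}):=\{\,m\mapsto\lambda(\sigma(m)u)\mid u\in\sigma\,\}$ is the Whittaker model of $\sigma$ and $\mathcal{C}$ is the space of smooth functions on $\GL_r$ that are compactly supported modulo $N_r$ and satisfy \eqref{whe}. The plan is to follow Jacquet--Shalika \cite[Section 3]{JS} (see also \cite[Section 4]{Cog}). First I would note that $u\mapsto\bigl(m\mapsto\lambda(\sigma(m)u)\bigr)$ is a continuous, $\GL_r$-equivariant map from $\sigma$, with the right regular action, into $C^\infty(\GL_r)$, and that it is injective, its kernel being a closed $\GL_r$-submodule of the irreducible representation $\sigma$ which is proper since $\lambda\neq0$. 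Hence it suffices to produce, for the given $W_r\in\mathcal{C}$, one vector $u_\sigma\in\sigma$ with $\lambda(\sigma(\cdot)u_\sigma)=W_r$.

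The heart of the matter is the ``Kirillov model'' on the mirabolic subgroup $Q_r\subset\GL_r$ (the stabilizer of the last standard basis covector), which contains $N_r$ and has the form $\GL_{r-1}\ltimes V$ with $V$ abelian. I would show that the restriction map carries $\mathcal{W}(\sigma,\psi_{r+1}^{-1})$ onto a $Q_r$-submodule of $C^\infty(Q_r)$ that contains the analogous space $\mathcal{C}_{Q_r}$ of smooth functions on $Q_r$ which are compactly supported modulo $N_r$ and satisfy \eqref{whe} on $Q_r$. The tool is the analysis of $\sigma|_{Q_r}$ through its Bernstein--Zelevinsky-type filtration by the derivatives of $\sigma$: integrating Whittaker functions along $V$ against additive characters relates them to Whittaker functions of representations of $\GL_{r-1}$, and the bottom layer of the resulting filtration, which is present precisely because $\sigma$ is generic, is isomorphic to $\mathcal{C}_{Q_r}$. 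I expect this step to be the main obstacle: the passage from the $p$-adic Bernstein--Zelevinsky machinery to the archimedean Fr\'echet setting is genuinely analytic, requiring control of the relevant LF-space topologies and of the non-degenerate part of $\sigma|_{Q_r}$, and this is exactly the content worked out in \cite[Section 3]{JS}.

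Finally I would deduce the statement for all of $\GL_r$ from the mirabolic case. Given $W_r\in\mathcal{C}$ with support in $N_r\Omega$ for some compact $\Omega\subset\GL_r$, I would cover the image of $\Omega$ in $N_r\backslash\GL_r$ by finitely many open sets of the form $N_r Q_r U g_i$, with $U$ a fixed neighbourhood of $1$ in $\GL_r$ and $g_i\in\GL_r$, take a smooth $N_r$-invariant partition of unity subordinate to this cover, and use that the right regular action of $\GL_r$ preserves $\mathcal{W}(\sigma,\psi_{r+1}^{-1})$ to reduce to the case where $W_r$ is supported in a small neighbourhood of $Q_r$. For such $W_r$, convolution in the directions transverse to $Q_r$ against test functions $\varphi\in C_c^\infty(\GL_r)$ stays in $\mathcal{W}(\sigma,\psi_{r+1}^{-1})$ (it corresponds to $u\mapsto\sigma(\varphi)u$, and by the Dixmier--Malliavin theorem every vector is a finite sum of such); combining this smoothing with Step 2 produces the required $u_\sigma$, the absolute convergence of the auxiliary integrals in the Fr\'echet topology of $\sigma$ being guaranteed by the compact support of $W_r$. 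The details of this last reduction are carried out in \cite[Section 3]{JS} and \cite[Section 4]{Cog}.
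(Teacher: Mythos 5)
The paper does not actually prove this lemma: it attributes it to Jacquet--Shalika and points to \cite[Section 3]{JS} and \cite[Section 4]{Cog}, which is also where you ultimately defer the analytic content. So the question is whether your sketch of that argument is sound, and there it has a structural error: the mirabolic subgroup is placed one level too low. The lemma concerns the restriction to $\GL_r$ of the Whittaker model of $\sigma$, an irreducible $\psi_{r+1}^{-1}$-generic representation of $\GL_{r+1}$; the relevant mirabolic is therefore $P_{r+1}\subset\GL_{r+1}$ (stabilizer of the last basis covector of $X'_{r+1}$), which equals $\GL_r\ltimes V_{r+1}$ with $V_{r+1}\cong\K^r$ abelian and contains $N_{r+1}=N_r\ltimes V_{r+1}$. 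The Jacquet--Shalika/Kirillov-model theorem states that $\mathcal W(\sigma,\psi_{r+1}^{-1})\big|_{P_{r+1}}$ contains every smooth function on $P_{r+1}$ compactly supported modulo $N_{r+1}$ with the $\psi_{r+1}^{-1}$-transformation property; since $N_{r+1}\backslash P_{r+1}\cong N_r\backslash\GL_r$, the lemma follows immediately by restriction, with no further reduction. You instead take the mirabolic $Q_r\subset\GL_r$ with Levi $\GL_{r-1}$, and are then forced into a passage from $Q_r$ to $\GL_r$ by coverings, partitions of unity and Dixmier--Malliavin.

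That final passage is the gap. The containment $\mathcal C\subseteq\mathcal W(\sigma,\psi_{r+1}^{-1})\big|_{\GL_r}$ cannot be deduced from its analogue on $Q_r$ using only the right action of $\GL_r$: the mechanism that cuts down supports in the torus directions of $\GL_r$ is Fourier analysis on the abelian radical $V_{r+1}$ of $P_{r+1}$, i.e.\ it genuinely uses the ambient $\GL_{r+1}$. The case $r=1$ makes this stark: there $Q_1$ is trivial, $\mathcal C=C_c^\infty(\K^\times)$, and the assertion that every compactly supported smooth function on $\K^\times$ equals $m\mapsto\lambda(\sigma(\diag(m,1))u)$ is the full, nontrivial Kirillov-model theorem for $\GL_2$; convolving $u\mapsto\sigma(\varphi)u$ with $\varphi\in C_c^\infty(\GL_1)$ only reshuffles the space $\{W_u|_{\K^\times}\}$ you are trying to identify, and controlling $W_u$ on translates $Q_r t$ off $Q_r$ is precisely the Kirillov-to-Whittaker extension problem, which your Step~2 does not address. (A minor further point: your injectivity argument in Step~1 is also incorrect as stated, since the kernel is only a $\GL_r$-submodule while $\sigma$ is irreducible under $\GL_{r+1}$; but as you note, injectivity is not needed.) Your description of the derivative/filtration mechanism is essentially the right one --- it must simply be run on $\sigma|_{P_{r+1}}$, after which your Step~3 disappears entirely.
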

Let $\phi_{\bar N}$ be a smooth function on $\bar{N}_{P_r}$ with
compact support. Pick $W_r$ and $\phi_{\bar N}$ appropriately so
that
\begin{equation}\label{intnonzero}
   \int_{(N_r\backslash \GL_r)\times \bar{N}_{P_r}} \delta_{P_r}^{-1}(m) \Phi(m,\bar
     n) W_r(m) \phi_{\bar N}(\bar n)\, dm\,d\bar{n}\neq 0.
\end{equation}
Here and as usual, we denote by
\[
  \delta_H: h\mapsto \abs{\det(\Ad_h)}
\]
the modular character of a Lie group $H$.

\subsection{The construction of $f_\rho$}

Note that
\begin{equation}\label{intgroup}
   P'_{r+1}\cap G=N_{P_r}\rtimes (G_0\times \GL_r).
\end{equation}
By counting the dimensions of the concerned Lie groups, we check
that the multiplication map
\begin{equation}\label{submersion}
  P'_{r+1}\times G\rightarrow G' \,\textrm{ is a submersion.}
\end{equation}
From \eqref{intgroup} and \eqref{submersion}, we see that the
multiplication map
\[
  \iota_{G'}: (N_{P_{r+1}'}\rtimes \GL_{r+1})\times (G_0'\ltimes
  \bar{N}_{P_r})\rightarrow G'
\]
is an open embedding.

Put
\[
  f_{\rho}(x):=\left\{
                     \begin{array}{ll}
                       \phi_{\bar N}(\bar n)\, f_{\pi_0}(g')
  \otimes (\sigma(m) u_\sigma),\quad& \textrm{if }x=\iota_{G'}(n',m, g', \bar{n}),\\
                       0, \quad&\textrm{if $x$ is not in the image of $\iota_{G'}$},
                      \end{array}
                  \right.
\]
where $u_\sigma$ is as in Lemma \ref{usigma}. Then $f_\rho\in
\pi_s'$ (recall that $s$ is assumed to be $0$).

Finally, we have that
\begin{eqnarray*}
 &&\quad I_\mu(f_\rho, u_\pi)\\
 &&=\int_{S_r\backslash G}\mu(\pi(x)u_\pi,\,\Lambda(f_\rho(x)))\,dx\\
 &&=\int_{(N_r\backslash \GL_r)\times (G_0\backslash G_0')\times \bar{N}_{P_r}}
 \mu(\pi(m g' \bar{n})u_\pi,\,\Lambda(f_\rho(m g'\bar{n})))
 \delta_{P_r}^{-1}(m)\, dm\,dg'\,d\bar{n}\\
 &&=\int_{(N_r\backslash \GL_r)\times (G_0\backslash G_0')\times \bar{N}_{P_r}}
 \delta_{P_r}^{-1}(m) \lambda(\sigma(m) u_\sigma) \phi_{\bar N}(\bar n)\\
 &&\phantom{\int_{(N_r\backslash \GL_r)\times (G_0\backslash G_0')\times
 \overline{N}_{P_r}}} \cdot \mu(\pi(m g'\bar{n})u_\pi,\,f_{\pi_0}(g'))
 \, dm\,dg'\,d\bar{n}\\
 &&=\int_{(N_r\backslash \GL_r)\times \bar{N}_{P_r}}
 \delta_{P_r}^{-1}(m) \Phi(m, \bar n) W_r(m) \phi_{\bar N}(\bar n) dm\,d\bar{n},\\
\end{eqnarray*}
which converges to a nonzero number by \eqref{intnonzero}. This
finishes the proof of Proposition \ref{nonzero}.

\section{Another integral formula on $S_r\backslash G$}\label{int}

This section and the next section are devoted to a proof of
Proposition \ref{converge} in the case when $E_0':=E_0\oplus \K
v_0'$ is isotropic, i.e., when $E'_0$ has a torsion free isotropic
vector. The anisotropic case is simpler and is left to the reader.
We first develop some generalities in the following two subsections.

\subsection{Commuting positive forms}

Let $F$ be a free $\K$-module of finite rank. By a positive form on
$F$, we mean a $\R$-bilinear map
\[
   [ \,,\,]_F :F\times F\rightarrow \K
\]
satisfying
\[
  [ u,v]_F=\overline{[v,u]_F}, \quad [ au,v]_F=a[u,
     v]_F,\quad a\in \K,\, u,v\in F.
\]
and
\[
[u,u]_F\in \K^\times_+\quad \textrm{for all torsion free }u\in F.
\]
Here $\bar a\in \K$ denotes the componentwise complex conjugation of
$a$, for $a\in \K$.

Now further assume that $F$ is a hermitian $\K$-module, i.e., a
non-degenerate hermitian form $\la\,,\,\ra_F$ (with respect to
$\tau$) on $F$ is given. We say that the positive form $[\,,\,]_F$
is commuting (with respect to $\la\,,\,\ra_F$) if
\[
   \theta_F^2=1,
\]
where $\theta_F:F\rightarrow F$ is the $\R$-linear map specified by
\begin{equation}\label{defs}
  [ u,v]_F=\la u, \theta_F v\ra_F, \quad u,v\in F.
\end{equation}

The following lemma is elementary.

\begin{lem}
Up to the action of $\oU(F)$, there exists a unique commuting
positive form  on $F$.
\end{lem}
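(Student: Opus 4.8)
The statement asserts both existence and uniqueness (up to $\oU(F)$) of a commuting positive form on a hermitian $\K$-module $F$, where $\K$ runs over the five pairs in \eqref{five}. The plan is to treat the five cases in parallel by reducing everything to a standard linear-algebra fact. First I would observe that a positive form $[\,,\,]_F$ on $F$ together with the hermitian form $\la\,,\,\ra_F$ determines, via \eqref{defs}, an $\R$-linear operator $\theta_F$, and conversely the positive form is recovered from $\theta_F$. Chasing the defining identities, the conditions ``$[\,,\,]_F$ is $\K$-sesquilinear and hermitian'' translate into ``$\theta_F$ is $\K$-linear and self-adjoint with respect to $\la\,,\,\ra_F$'', while positivity of $[\,,\,]_F$ says that $\la u,\theta_F u\ra_F\in\K^\times_+$ for all torsion-free $u$; the commuting condition is exactly $\theta_F^2=1$.

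So the problem becomes: show that there is a $\K$-linear, $\la\,,\,\ra_F$-self-adjoint involution $\theta_F$ with $\la u,\theta_F u\ra_F$ ``totally positive'' for all torsion-free $u$, and that any two such are conjugate under $\oU(F)$. For existence I would pick a $\K$-basis of $F$ in which $\la\,,\,\ra_F$ has a normalized matrix (diagonal with entries in $\{\pm 1\}$ after the usual classification of hermitian forms over $\R$, $\C$, or the split algebras $\R\times\R$, $\C\times\C$), and take $\theta_F$ to be the operator whose matrix in that basis is this diagonal sign matrix; then $\la u,\theta_F u\ra_F$ becomes a genuine positive-definite hermitian form, giving a positive form, and $\theta_F^2=1$ is clear. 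Equivalently and more invariantly, $\theta_F$ is the reflection through a maximal positive-definite subspace $F_+$ along its orthogonal complement $F_-$; any commuting positive form arises this way, with $F_\pm$ the $\pm 1$-eigenspaces of $\theta_F$. For uniqueness, given two such involutions $\theta_F,\theta_F'$ with eigenspace decompositions $F=F_+\oplus F_-=F_+'\oplus F_-'$, the positivity forces $F_+,F_+'$ to be maximal positive-definite for $\la\,,\,\ra_F$ (and $F_-,F_-'$ maximal negative-definite); by Witt's theorem (over each of the five $\K$, in the appropriate hermitian/bilinear sense) there is an element of $\oU(F)$ carrying $F_+$ to $F_+'$, hence $F_-$ to $F_-'$, hence conjugating $\theta_F$ to $\theta_F'$ and $[\,,\,]_F$ to $[\,,\,]_F'$.

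The steps, in order: (1) set up the bijection between positive forms and $\K$-linear self-adjoint operators via \eqref{defs}, and translate each of the defining properties; (2) reduce the product-algebra cases $\R\times\R$, $\C\times\C$ to a single component by the $\tau$-interchange, so ``hermitian'' there just means a bilinear form on one factor and $\K^\times_+$-positivity is positivity on that factor; (3) construct $\theta_F$ from a Witt decomposition $F=F_+\perp F_-$ into maximal definite subspaces, checking $\theta_F^2=1$, self-adjointness, and total positivity of $\la u,\theta_F u\ra_F$; (4) prove uniqueness by identifying the $+1$-eigenspace of any commuting $\theta_F$ with a maximal positive-definite subspace and invoking Witt's extension theorem to move one to another inside $\oU(F)$. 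The main obstacle is really just bookkeeping: making sure the five cases of $(\K,\tau)$ — especially the split cases $\R\times\R$ and $\C\times\C$ and the symmetric-bilinear cases $(\R,1_\R)$, $(\C,1_\C)$ — are all covered by one uniform Witt-type argument, and that ``$\K^\times_+$-valued'' positivity is interpreted correctly in each. Since the lemma is asserted to be ``elementary,'' I would keep this terse and not belabor the case analysis.
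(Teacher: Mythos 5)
There is a genuine gap, and it sits exactly at the step you call a translation: the claim that the operator $\theta_F$ defined by \eqref{defs} is $\K$-linear is false in three of the five cases. From the defining identities one computes $[u,av]_F=\bar a\,[u,v]_F$ (conjugate-symmetry plus linearity in the first slot) while $\la u,bv\ra_F=b^{\tau}\la u,v\ra_F$, so \eqref{defs} forces $\theta_F(av)=\overline{a^{\tau}}\,\theta_F(v)$. This is $\K$-linear only for $(\R,1_\R)$ and $(\C,\overline{\phantom{a}}\,)$. For $(\C,1_\C)$ the map $\theta_F$ is conjugate-linear, and for the split cases $(\R\times\R,\tau_\R)$, $(\C\times\C,\tau_\C)$ it interchanges the two factors of $F$. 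As a result, your steps (3) and (4) break down precisely in the case the paper chooses to check, namely the complex orthogonal groups: a nondegenerate symmetric $\C$-bilinear form has no nonzero ``positive-definite'' $\C$-subspaces at all (replace $v$ by $iv$ to flip the sign of $\la v,v\ra_F$), so there is no Witt decomposition $F=F_+\perp F_-$ into maximal definite submodules, and the $+1$-eigenspace of the conjugate-linear involution $\theta_F$ is a \emph{real form} $F_0$ of $F$, not a $\C$-submodule. The correct classification there, which is what the paper's proof records, is that commuting positive forms correspond bijectively and $\oU(F)$-equivariantly to real forms $F_0$ on which $\la\,,\,\ra_F$ restricts to a real-valued positive-definite form, and $\oO_n(\C)$ acts transitively on these. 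Likewise in the split cases the datum is essentially an inner product on one factor (a Cartan involution / choice of maximal compact of $\GL_n$), and uniqueness is transitivity of $\GL$ on inner products, not a Witt-type statement about definite subspaces. Your existence recipe (``take $\theta_F$ to be the diagonal sign matrix of a normalized basis'') also fails in these cases: over $(\C,1_\C)$ the normalized matrix is the identity, which would give $[\,,\,]_F=\la\,,\,\ra_F$, a form that is neither conjugate-symmetric nor positive.

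For the two cases where $\theta_F$ really is $\K$-linear and self-adjoint, i.e. $\oU(p,q)$ and $\oO(p,q)$, your argument is correct and is the standard one (conjugacy of maximal positive/negative definite decompositions via Witt). But as a uniform proof of the lemma it does not go through; you need to treat the linearity type of $\theta_F$ case by case and replace the ``maximal definite subspace'' invariant by the appropriate one (a compatible real form, respectively an inner product on one factor) in the remaining three cases.
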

\begin{proof}
We check the case of complex orthogonal groups, and leave other
cases to the reader. So assume that $(\K,\tau)=(\C, 1_\C)$. Then
\[
  [\,,\,]_F\mapsto \textrm{the eigenspace of $\theta_F$ of eigenvalue
  $1$}\]
defines a $\oU(F)$-equivariant bijection:
\[
  \begin{array}{l}
        \,\quad\{\,\textrm{commuting positive
                 forms on $F$}\,\}\medskip\\
             \leftrightarrow
                \{\,\textrm{real forms $F_0$ of $F$ so that $\la\,,\,\ra_F|_{F_0\times F_0}$ is real valued and positive
                definite}\,\}.
   \end{array}
\]
The assertion follows immediately. \end{proof}

\subsection{A Jacobian}
Now fix a commuting positive form $[\,,\,]_F$, and denote by
$\oK(F)$ its stabilizer in $\oU(F)$ (which is also the centralizer
of $\theta _F$ in $\oU(F)$). Then $\oK(F)$ is a maximal compact
subgroup of $\oU(F)$. Write
\[
  F=F_+\oplus F_-,
\]
where $F_+$ and $F_-$ are eigenspaces of $\theta_F$ of eigenvalues
$1$ and $-1$, respectively.

With the preparation of the commuting positive forms, we set
\[
  S_F:=\{u+v\mid u\in F_+,\,v\in F_-,\, [u,u]_F=[v,v]_F=1,
  \,[u,v]_F= 0\}.
\]
Assume that $F$ is isotropic, i.e., there is a torsion free vector
of $F$ which is isotropic with respect to $\la\,,\,\ra_F$. This is
the case of concern. Then $S_F$ is nonempty. It is easy to check
that $\oK(F)$ acts transitively on $S_F$. According to \cite{Sh},
$S_F$ is in fact a Nash-manifold. Furthermore, it is a Riemannian
manifold with the restriction of the metric
\[
  \frac{1}{\dim_\R \K} \, \tr_{\K/\R}[\, ,\,]_F.
\]

Write
\[
  \Gamma_{F, -1}:=\{u\in F\mid \la u,u\ra_F=-1\},
\]
which is a Nash-manifold. It is also a pseudo-Riemannian manifold
with the restriction of the metric
\[
  \frac{1}{\dim_\R \K} \, \tr_{\K/\R}\la\, ,\,\ra_F.
\]

Equip $\R^\times_+$ with the invariant Riemannian metric so that the
tangent vector $t\frac{d}{dt}$ at $t\in \R^\times_+$ has length $1$.
As a product of one or two copies of $\R^\times_+$, $\K^\times_+$ is
again a Riemannian manifold.

Define a map
\[
  \begin{array}{rcl}
  \eta_F:  S_F \times\K^\times_+&\rightarrow& \Gamma_{F,-1},\smallskip\\
    (w, t)&\mapsto & \frac{t-t^{-\tau}}{2} u+\frac{t+t^{-\tau}}{2}v,
  \end{array}
\]
where
\[
  w=u+v, \quad u\in F_+, v\in F_-.
\]
Note that the domain and the range of the smooth map $\eta_F$ have
the same real dimension. Denote by $J_{\eta_F}$ the Jacobian of
$\eta_F$ (with respect to the metrics defined above), which is a
nonnegative continuous function on $S_F \times\K^\times_+$. Since
$\eta_F$ and all the involved metrics are semialgebraic,
$J_{\eta_F}$ is also semialgebraic (see \cite{Sh} for the notion
of semialgebraic maps and Nash maps). Note that $\oK(F)$ acts
transitively on $S_F$ (and trivially on $\K^\times_+$), $\eta_F$
and all the involved metrics are $\oK(F)$-equivariant. Therefore,
there is a nonnegative continuous semialgebraic function $J_F$ on
$\K^\times_+$ such that
\[
  J_{\eta_F}(w,t)=J_F(t), \quad w\in S_F,\, t\in \K^\times_+.
\]

Denote $\con(X)$ the space of continuous functions on any
(topological) space $X$.

\begin{leml}\label{intgamma}
For $\phi \in \con (\Gamma_{F,-1})$, one has that
\[
  \int_{\Gamma_{F,-1}} \phi(x)\,dx=\frac{1}{2}\int_{\K^\times_+}J_F(t)
  \int_{S_F} \phi(\eta_{F}(w,t))\,dw\, d^\times t,
\]
where $dx$, $dw$ and $d^\times t$ are the volume forms associated to
the respective metrics.
\end{leml}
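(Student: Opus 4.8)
The plan is to reduce Lemma~\ref{intgamma} to a change-of-variables (coarea-type) computation for the map $\eta_F$, exploiting the $\oK(F)$-equivariance to decouple the $S_F$-direction from the $\K^\times_+$-direction. First I would observe that $\eta_F$ is a bijection from $S_F\times\K^\times_+$ onto an open dense subset of $\Gamma_{F,-1}$: given $x=au+bv$ with $u\in F_+$, $v\in F_-$ unit vectors (in the positive form) and $[u,v]_F=0$, the condition $\la x,x\ra_F=-1$ reads, after unwinding \eqref{defs} and $\theta_F^2=1$, as a relation forcing $b^2-a^2$ (componentwise, in $\K^\times_+$) to equal $1$; solving this for $a,b$ in terms of a single parameter $t\in\K^\times_+$ recovers exactly the formulas $a=\frac{t-t^{-\tau}}{2}$, $b=\frac{t+t^{-\tau}}{2}$, and the only points missed are those with $a$ or $b$ not invertible, a lower-dimensional (measure-zero) set. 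Hence integration over $\Gamma_{F,-1}$ may be computed as integration over $S_F\times\K^\times_+$ against the Jacobian $J_{\eta_F}$ of $\eta_F$ with respect to the chosen metrics; this is the standard smooth change-of-variables formula for a diffeomorphism between (pseudo-)Riemannian manifolds of equal dimension, valid since $\Gamma_{F,-1}$ and $S_F\times\K^\times_+$ are Nash manifolds and $\eta_F$ is a Nash map.

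Next I would invoke the already-established fact that $\oK(F)$ acts transitively on $S_F$, trivially on $\K^\times_+$, and that $\eta_F$ together with all three metrics is $\oK(F)$-equivariant; this forces $J_{\eta_F}(w,t)$ to be independent of $w$, so $J_{\eta_F}(w,t)=J_F(t)$ as stated in the text preceding the lemma. Substituting this into the change-of-variables formula gives
\[
  \int_{\Gamma_{F,-1}}\phi(x)\,dx
   =\int_{S_F\times\K^\times_+}\phi(\eta_F(w,t))\,J_F(t)\,dw\,d^\times t,
\]
and then Fubini over the product (legitimate since $J_F$ is continuous and the integrand pulled back is continuous) separates the $dw$ and $d^\times t$ integrations into the iterated form in the statement.

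The only remaining discrepancy is the factor $\frac12$. I would account for it by a careful bookkeeping of the map $\eta_F$: the parametrization is genuinely two-to-one onto its image up to a null set, or equivalently the product measure $dw\,d^\times t$ over-counts by a factor of $2$ relative to $dx$ — concretely, the sign ambiguity in extracting $(a,b)$ from $(t-t^{-\tau})/2$ and $(t+t^{-\tau})/2$ (there is a residual involution $t\mapsto -t^{-\tau}$, or a doubling coming from the two choices of square root in the relation $b^2-a^2=1$) means each point of $\Gamma_{F,-1}$ in the image is hit twice. I would make this precise by computing $\eta_F$ on the relevant slice and identifying the fiber of the induced map to $\Gamma_{F,-1}/\!\sim$; alternatively one normalizes directly by evaluating both sides on a single conveniently chosen test function.

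The main obstacle I anticipate is the explicit Jacobian/volume-form computation that legitimizes the change of variables and pins down the constant: one must correctly set up the induced (pseudo-)Riemannian metrics on $S_F$, on $\K^\times_+\cong(\R^\times_+)^{d_\K}$ with the $t\,\frac{d}{dt}$-normalization, and on $\Gamma_{F,-1}$ with $\frac{1}{\dim_\R\K}\tr_{\K/\R}\la\,,\,\ra_F$, then differentiate $\eta_F$ and verify that the resulting Jacobian depends only on $t$ (which the equivariance argument guarantees a priori) and that the overall normalization produces exactly $\tfrac12$. This is the kind of routine-but-delicate linear-algebra-with-metrics calculation that the authors would likely either carry out in the case of complex orthogonal groups and leave the others to the reader, or bypass by the test-function normalization trick; I would present the equivariance reduction in full and then do the Jacobian computation in one representative case.
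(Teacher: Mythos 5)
Your proposal matches the paper's proof in substance: the paper simply observes that $\eta_F$ restricts to diffeomorphisms from each of $S_F\times\{\la t\ra>1\}$ and $S_F\times\{\la t\ra<1\}$ onto the complement of the measure-zero set $\Gamma_{F,-1}(1)$, so the parametrization is generically two-to-one and the change of variables with the ($w$-independent, by $\oK(F)$-equivariance) Jacobian $J_F(t)$ produces exactly the factor $\tfrac12$. The only points to tidy: your opening claim that $\eta_F$ is a bijection onto a dense open set contradicts the (correct) two-to-one count you invoke later, and the deck transformation is $(u+v,t)\mapsto(-u+v,\,t^{-\tau})$ acting on both factors, not a map of $t$ alone.
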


\begin{proof}
For every $t\in \K^\times_+$, write
\begin{equation}
\label{lr}
  \la t\ra:=\left\{
               \begin{array}{ll}
                t, \quad & \textrm{if $d_\K=1$,}\\
                t_1 t_2, \quad & \textrm{if $d_\K=2$ and $t=(t_1,t_2)$.}\\
               \end{array}
        \right.
\end{equation}

Write
\[
  \Gamma_{F,-1}(1)=\{u\in \Gamma_{F,-1}\mid \la [u,u]_F\ra=1\},
\]
which is a closed submanifold of $\Gamma_{F,-1}$ of measure zero.
One checks case by case that $\eta_F$ induces diffeomorphisms from
both
\[
 S_F\times \{t\in \K^\times_+\mid \la t\ra >1\}\quad \textrm{and}\quad S_F\times
   \{t\in \K^\times_+\mid \la t\ra <1\}
\]
onto $\Gamma_{F,-1}\setminus \Gamma_{F,-1}(1)$. The lemma then
follows.
\end{proof}

\subsection{A preliminary integral formula on $G_0\backslash G_0'$}

We recall the notations of Section \ref{pre}. The isotropic
condition on $E_0'=E_0\oplus \K v_0'$ (which we assume as in the
beginning of this section) ensures that there is a vector $v_0\in
E_0$ such that
\[
  \la v_0,v_0\ra=1.
\]
Denote by $Z_0$ its orthogonal complement in $E_0$. Then $E$ is an
orthogonal sum of four submodules:
\begin{equation}\label{decome}
  E=(X_r\oplus Y_r)\oplus Z_0\oplus \K v_0\oplus \K v_0'.
\end{equation}
Fix a commuting positive form $[\,,\,]_{E}$ on $E$ so that
(\ref{decome}) is an orthogonal sum of five submodules with respect
to $[\,,\,]_{E}$. Recall that
\[
  G:=\oU(E),\quad G_0':=\oU(E_0'),\quad G_0=\oU(E_0).
\]
Put
\[
    K:=\oK(E),\quad K_0':=\oK(E_0'),\quad K_0:=\oK(E_0).
\]

For every $t\in\K^\times_+$, denote by $g_t\in G_0'$ the element
which is specified by
\begin{equation}\label{defgt}
 \left\{
   \begin{array}{l}
    g_t(v_0+v_0')=t(v_0+v_0'),\\
    g_t(v_0-v_0')=t^{-\tau}(v_0-v_0'), \,\,\textrm{and}\\
    g_t|_{X_r\oplus Y_r\oplus Z_0}=\textrm{the identity map}.
   \end{array}
   \right.
\end{equation}

We use the results of the last two subsections to prove the
following lemma.
\begin{leml}\label{lemint}
For $\phi \in \con (G_0\backslash G_0')$, we have
\[
   \int_{G_0\backslash G_0'} \phi(x) \,dx=\int_{\K_+^\times} J_{E_0'}(t) \int_{K_0'}
   \,\phi(g_t k) \,dk\,d^\times t,
\]
where $dk$ is the normalized haar measure on $K_0'$, and $dx$ is a
suitably normalized $G_0'$-invariant positive measure on
$G_0\backslash G_0'$.
\end{leml}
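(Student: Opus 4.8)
The plan is to realize $G_0 \backslash G_0'$ as (an open dense piece of) $\Gamma_{E_0', -1}$ and then apply Lemma \ref{intgamma} to $F = E_0'$. First I would note that $G_0' = \oU(E_0')$ acts on the Nash-manifold $\Gamma_{E_0', -1} = \{u \in E_0' \mid \langle u, u \rangle = -1\}$, and that this action is transitive (since $\langle v_0', v_0' \rangle_{E_0'} = -1$ and Witt's theorem lets us move any vector of norm $-1$ to $v_0'$). The stabilizer of $v_0'$ is exactly $G_0 = \oU(E_0)$, because $E_0 = (v_0')^\perp$ inside $E_0'$. Hence the orbit map $g' \mapsto g'(v_0')$ identifies $G_0 \backslash G_0'$ with $\Gamma_{E_0', -1}$, and under this identification a $G_0'$-invariant measure on the left corresponds to the $G_0'$-invariant (Riemannian volume) measure $dx$ on $\Gamma_{E_0', -1}$; pinning down the normalization of $dx$ on the homogeneous space is what lets us write ``a suitably normalized $G_0'$-invariant positive measure.''

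Next I would transport the parametrization $\eta_{E_0'} \colon S_{E_0'} \times \K_+^\times \to \Gamma_{E_0', -1}$ through this identification. Decompose $E_0' = (E_0')_+ \oplus (E_0')_-$ for $\theta_{E_0'}$; since (\ref{decome}) is an orthogonal decomposition compatible with $[\,,\,]_E$, we have $v_0 \in (E_0')_+$ and $v_0' \in (E_0')_-$ up to normalization, so that $w_0 := v_0 + v_0' \in S_{E_0'}$ and
\[
  \eta_{E_0'}(w_0, t) = \tfrac{t - t^{-\tau}}{2}\, v_0 + \tfrac{t + t^{-\tau}}{2}\, v_0' = g_t(v_0') - \big(g_t(v_0') \text{ minus its } v_0\text{-component correction}\big),
\]
which — after a direct check using the defining relations (\ref{defgt}) for $g_t$ — is exactly $g_t(v_0')$. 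Thus $\eta_{E_0'}(w_0, t)$ corresponds to the coset $G_0 g_t$ in $G_0 \backslash G_0'$. Moreover $K_0' = \oK(E_0')$ acts transitively on $S_{E_0'}$, and $g' \mapsto g'(w_0)$ intertwines the $K_0'$-action on $G_0' / (G_0 \text{ appropriately})$ with the $K_0'$-action on $S_{E_0'}$; a normalized Haar measure $dk$ on $K_0'$ then pushes to the volume form $dw$ on $S_{E_0'}$ (using that the metric on $S_{E_0'}$ is $K_0'$-invariant and $S_{E_0'}$ is a single $K_0'$-orbit). Concretely, the fibration $K_0' \twoheadrightarrow S_{E_0'}$, $k \mapsto k(w_0)$, lets us replace $\int_{S_{E_0'}} \phi(\eta_{E_0'}(w,t))\, dw$ by a constant multiple of $\int_{K_0'} \phi(g_t k)\, dk$ — the point being that $k(\eta_{E_0'}(w_0, t)) = \eta_{E_0'}(k(w_0), t)$ so the $K_0'$-average over $w$ becomes a $K_0'$-average over $k$.

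Assembling: Lemma \ref{intgamma} with $F = E_0'$ gives
\[
  \int_{\Gamma_{E_0', -1}} \phi(x)\, dx = \tfrac{1}{2} \int_{\K_+^\times} J_{E_0'}(t) \int_{S_{E_0'}} \phi(\eta_{E_0'}(w, t))\, dw\, d^\times t,
\]
and substituting the two identifications above (orbit of $v_0'$ for $\Gamma_{E_0', -1}$, orbit of $w_0$ under $K_0'$ for $S_{E_0'}$, and $\eta_{E_0'}(w_0, t) \leftrightarrow G_0 g_t$) turns the right-hand side into $\int_{\K_+^\times} J_{E_0'}(t) \int_{K_0'} \phi(g_t k)\, dk\, d^\times t$ up to an overall positive constant, which we absorb into the normalization of $dx$ on $G_0 \backslash G_0'$. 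The main obstacle I anticipate is bookkeeping with the measures and the factor of $\tfrac12$: one must check carefully that the $K_0'$-push-forward of $dk$ really is (a fixed multiple of) $dw$, that the image of $\eta_{E_0'}(\,\cdot\,, t)$ over varying $w$ is precisely the $K_0'$-orbit $K_0' g_t$ in $G_0 \backslash G_0'$ and that this sweeps out all of $\Gamma_{E_0', -1}$ as $t$ ranges over $\K_+^\times$ (complementary cases $\langle t \rangle \gtrless 1$, exactly as in the proof of Lemma \ref{intgamma}), and that the Jacobian factor $J_{E_0'}$ depends only on $t$. These are all case-by-case verifications of the kind already carried out in the previous two subsections, so the lemma follows.
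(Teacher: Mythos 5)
Your proposal is correct and follows essentially the same route as the paper: identify $G_0\backslash G_0'$ with $\Gamma_{E_0',-1}$ via the orbit of $v_0'$, apply Lemma \ref{intgamma} with $F=E_0'$, convert the inner $S_{E_0'}$-integral into a $K_0'$-integral using transitivity of $K_0'$ on $S_{E_0'}$, and absorb the resulting constant into the normalization of $dx$. The only cosmetic difference is in the bookkeeping you already flag: the paper uses the map $g\mapsto g^{-1}v_0'$ (which is what is actually well defined on left cosets $G_0 g$), so $G_0 g_t$ corresponds to $\eta_{E_0'}(v_0+v_0',t^{-1})$ and one invokes the symmetry $J_{E_0'}(t)=J_{E_0'}(t^{-1})$, and it disposes of the $S_{E_0'}$-integral by first averaging $\phi$ over $K_0'$ rather than by pushing Haar measure forward to $dw$.
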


\begin{proof}
By first integrating over $K_0'$, we just need to show that
\begin{equation}\label{intq}
   \int_{G_0\backslash G_0'} \phi(x) \,dx=\int_{\K_+^\times} J_{E_0'}(t) \, \phi(g_t) \,d^\times t,
\quad \phi\in \con(G_0\backslash G_0'/K_0').
\end{equation}

We identify $G_0\backslash G_0'$ with $\Gamma_{E_0',-1}$ by the
map $g\mapsto g^{-1}v_0'$. Note that $v_0+v_0'\in S_{E_0'}$ and
$G_0 g_t$ is identified with $\eta_{E_0'}(v_0+v_0',t^{-1})$. The
measure $dx$ is identified with a constant $C$ multiple of the
metric measure $dy$ on $\Gamma_{E_0',-1}$.

Let
\[
  \phi\in \con(G_0\backslash G_0'/K_0')=\con (K_0'\backslash\Gamma_{E_0',-1}).
\]
Then the function $\phi(\eta_{E_0'}(w,t^{-1}))$ is independent of
$w\in S_{E_0'}$. Also note that
\[
  J_{E_0'}(t)=J_{E_0'}(t^{-1}),\qquad t\in \K^\times_+.
\]
Therefore by Lemma \ref{intgamma}, we have
\begin{eqnarray*}
  \int_{G_0\backslash G_0'} \phi(x) \,dx
  && =C \int_{\Gamma_{E_0',-1}} \phi(y)\,dy\\
  && =\frac{1}{2}C \int_{\K^\times_+}J_{E_0'}(t)
  \int_{S_{E_0'}} \phi(\eta_{E_0'}(w,t))\,dw\, d^\times t\\
  && =\frac{1}{2}C \int_{\K^\times_+}J_{E_0'}(t^{-1})
  \int_{S_{E_0'}} \phi(\eta_{E_0'}(w,t^{-1}))\,dw\, d^\times t\\
  && =\frac{1}{2}C \int_{\K^\times_+}J_{E_0'}(t)
  \int_{S_{E_0'}} \phi(\eta_{E_0'}(v_0+v_0',t^{-1}))\,dw\, d^\times t\\
  && =\frac{1}{2}C \int_{\K^\times_+}J_{E_0'}(t) \phi(g_t)
  \int_{S_{E_0'}}1 \,dw\, d^\times t.\\
  \end{eqnarray*}
We finish the proof by putting
\[
  C:=2\left(\int_{S_{E_0'}}1 \,dw\right)^{-1}.
\]

\end{proof}

\subsection{The integral formula on $S_r\backslash G$}\label{defat}
Denote by $B_r$ the Borel subgroup of $\GL_r$ stabilizing the flag
\[
   0=X_0\subset X_1\subset\cdots \subset X_r.
\]
For every $\bt=(t_1,t_2,\cdots, t_r)\in (\K_+^\times)^r$, denote by
$a_\bt$ the element of $\GL_r$ whose restriction to
\[
  \{v\in X_i\mid [v, X_{i-1}]_{E}=0\}
\]
is the scalar multiplication by $t_i$, for $i=1,2,\cdots, r$.

\begin{prpl}\label{intf}
For $\phi \in \con (S_r\backslash G)$, one has that
\[
   \int_{S_r\backslash G} \phi(g)
   \,dg=\int_{(\K^\times_+)^r \times \K^\times_+\times K}
  \phi(a_\bt  g_t k ) \delta_{P_r}^{-1}(a_{\bt})\, \delta_{B_{r}}^{-1}(a_{\bt})\, J_{E_0'}(t)\,d^\times \bt \,d^\times t\,dk, \\
\]
where $dg$ is a suitably normalized right $G$-invariant measure on
$S_r\backslash G$.
\end{prpl}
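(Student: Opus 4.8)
The plan is to produce the asserted ``Iwasawa coordinates'' on $S_r\backslash G$ by assembling four pieces: the Iwasawa decomposition $G=P_rK$ relative to the maximal compact $K=\oK(E)$; the Levi descriptions $P_r=N_{P_r}\rtimes(G_0'\times\GL_r)$ and $S_r=N_{P_r}\rtimes(G_0\times N_r)$ from \eqref{dsmallP} and \eqref{SrPr}; the formula of Lemma \ref{lemint} for $G_0\backslash G_0'$; and the classical Iwasawa decomposition $\GL_r=N_r\cdot\{a_\bt\}\cdot K_{\GL_r}$, with $K_{\GL_r}$ a maximal compact of $\GL_r$. Each of the last three will contribute exactly one of the Jacobian factors $J_{E_0'}(t)$, $\delta_{B_r}^{-1}(a_\bt)$, $\delta_{P_r}^{-1}(a_\bt)$, while all of the compact data will collapse into the single integration $\int_K\,dk$.

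First I would record the disintegration of the $G$-invariant measure along the tower $S_r\subset P_r\subset G$. Since $N_{P_r}$ is normal in $P_r$ and contained in $S_r$, one has canonically
\[
 S_r\backslash P_r\;\cong\;(G_0\times N_r)\backslash(G_0'\times\GL_r)\;\cong\;(G_0\backslash G_0')\times(N_r\backslash\GL_r),
\]
and, since $G$ is unimodular, $S_r$ is unimodular (as is already presupposed by the statement), but $P_r$ is not, the $G$-invariant measure on $S_r\backslash G$ disintegrates over $P_r\backslash G$ with fibre $S_r\backslash P_r$ carrying a relatively invariant measure; concretely, writing $g=pk$ with $p\in P_r,\ k\in K$ and using $P_r\backslash G\cong(P_r\cap K)\backslash K$,
\[
 \int_{S_r\backslash G}\phi(g)\,dg=\int_{K}\int_{G_0\backslash G_0'}\int_{N_r\backslash\GL_r}\phi(m_0\,m\,k)\,\delta_{P_r}(m_0m)^{-1}\,dm\,dm_0\,dk
\]
for suitably normalized invariant measures $dm_0$ on $G_0\backslash G_0'$, $dm$ on $N_r\backslash\GL_r$, $dk$ on $K$. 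The precise exponent on $\delta_{P_r}$ is the only place where a sign must be pinned down, and this can be done by a direct check in the rank-one case. I would also note that $P_r\cap K=K_0'\times K_{\GL_r}$: indeed $K_0'=\oK(E_0')$ and $K_{\GL_r}$ lie in $K$ because $[\,,\,]_E$ is adapted to the orthogonal decomposition \eqref{decome}, and since $K\cap N_{P_r}=\{1\}$ and $K_0'\times K_{\GL_r}$ is maximal compact in the Levi $G_0'\times\GL_r$, the inclusion $K_0'\times K_{\GL_r}\subseteq P_r\cap K$ is forced to be an equality.

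Next I would substitute the two coordinate descriptions of the inner quotients. Lemma \ref{lemint} rewrites $\int_{G_0\backslash G_0'}(\cdots)\,dm_0$ as $\int_{\K^\times_+}J_{E_0'}(t)\int_{K_0'}(\cdots)\big|_{m_0=g_tk_0'}\,dk_0'\,d^\times t$, with $g_t$ as in \eqref{defgt}; and the Iwasawa decomposition of $\GL_r$ rewrites $\int_{N_r\backslash\GL_r}(\cdots)\,dm$ as $(\mathrm{const})\int_{(\K^\times_+)^r}\delta_{B_r}(a_\bt)^{-1}\int_{K_{\GL_r}}(\cdots)\big|_{m=a_\bt k_1}\,dk_1\,d^\times\bt$. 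Because the two factors $G_0'$ and $\GL_r$ of the Levi commute, $g_tk_0'\cdot a_\bt k_1=a_\bt g_t\,(k_0'k_1)$, so the argument of $\phi$ becomes $a_\bt g_t(k_0'k_1k)$. Moreover $\delta_{P_r}$ and $\delta_{B_r}$, being positive characters, are trivial on every compact subgroup, and $\delta_{P_r}$ is trivial on $G_0'$ (an orthogonal/unitary-type group has no nontrivial positive character); hence $\delta_{P_r}(g_tk_0'a_\bt k_1)^{-1}=\delta_{P_r}(a_\bt)^{-1}$ and $\delta_{B_r}(a_\bt k_1)^{-1}=\delta_{B_r}(a_\bt)^{-1}$, so the combined Jacobian is $\delta_{P_r}^{-1}(a_\bt)\,\delta_{B_r}^{-1}(a_\bt)\,J_{E_0'}(t)$, free of the compact variables. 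Finally, as $(k_0',k_1,k)$ runs over $K_0'\times K_{\GL_r}\times\big((P_r\cap K)\backslash K\big)$ the product $k_0'k_1k$ runs over $K$, and by Fubini for $K\to(P_r\cap K)\backslash K$ (recalling $P_r\cap K=K_0'\times K_{\GL_r}$) the three compact integrations collapse to $\int_K\,dk$. Collecting everything and fixing the global normalization of $dg$ yields the stated identity.

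The step I expect to be the main obstacle is the measure bookkeeping throughout: fixing the exact power of $\delta_{P_r}$ in the disintegration along $S_r\subset P_r\subset G$ (the fibre $S_r\backslash P_r$ carries only a relatively invariant measure), and then verifying cleanly that $K_0'$, $K_{\GL_r}$ and the $(P_r\cap K)$-fibre reassemble into one copy of $\int_K$ with matching normalization, so that no residual Jacobian survives in the compact directions. The remaining ingredients---triviality of $\delta_{P_r}|_{G_0'}$, the $\GL_r$-Iwasawa Jacobian $\delta_{B_r}^{-1}$, the inclusions $K_0',K_{\GL_r}\hookrightarrow K$, and the availability of $g_t$ (which uses the isotropy of $E_0'$ assumed in this section, so that $v_0$ and Lemma \ref{lemint} exist)---are all routine.
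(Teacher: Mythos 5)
Your proposal is correct and follows essentially the same route as the paper: Iwasawa decomposition $G=P_rK$ combined with $S_r\backslash P_r\cong(G_0\backslash G_0')\times(N_r\backslash\GL_r)$ and the factor $\delta_{P_r}^{-1}$, then the $\GL_r$-Iwasawa decomposition contributing $\delta_{B_r}^{-1}(a_\bt)$, Lemma \ref{lemint} contributing $J_{E_0'}(t)$, and absorption of the compact factors $K_{\GL_r}$ and $K_0'$ into $K$ using that the Levi factors commute. The paper's proof is just a more compact chain of equalities carrying out exactly this bookkeeping.
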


\begin{proof}
Write $K_r=K\cap \GL_r$. Then we have
\begin{eqnarray*}
  \int_{S_r\backslash G} \phi(g) \,dg
  &&=\int_{(N_{P_r} N_r G_0)\backslash (N_{P_r} \GL_r G_0' K)}
   \phi(g) \,dg\\
  &&=\int_{(N_r\backslash \GL_r)\times (G_0\backslash G_0')\times K}
  \phi(m g' k) \delta_{P_r}^{-1}(m)\,dm\,dg'\, dk\\
  &&=\int_{(\K^\times_+)^r \times  K_r\times (G_0\backslash G_0')\times K}
  \phi(a_\bt l g' k) \delta_{P_r}^{-1}(a_{\bt})\, \delta_{B_r}^{-1}(a_{\bt})\,d^\times \bt \,dl \,dg'\, dk\\
  &&=\int_{(\K^\times_+)^r \times (G_0\backslash G_0')\times K}
  \phi(a_\bt  g' k) \delta_{P_r}^{-1}(a_{\bt})\, \delta_{B_r}^{-1}(a_{\bt})\,d^\times \bt \,dg'\, dk\\
  &&\qquad (\textrm{$l\in K_r\subset \GL_r$ commutes with $g'\in G_0'$})\\
   &&=\int_{(\K^\times_+)^r \times \K^\times_+\times K_0'\times K}
  \phi(a_\bt  g_t l k) \delta_{P_r}^{-1}(a_{\bt})\, \delta_{B_r}^{-1}(a_{\bt})\, J_{E_0'}(t)\,d^\times \bt \,d^\times t\,dl\, dk \\
  &&\qquad (\textrm{By Lemma \ref{lemint}})\\
  &&=\int_{(\K^\times_+)^r \times \K^\times_+\times K}
  \phi(a_\bt  g_t k ) \delta_{P_r}^{-1}(a_{\bt})\, \delta_{B_r}^{-1}(a_{\bt})\, J_{E_0'}(t)\,d^\times \bt \,d^\times t\,dk. \\
\end{eqnarray*}

\end{proof}

\section{Proof of Proposition \ref{converge}}\label{conv2}

\subsection{An Iwasawa decomposition}
Recall that we have a hermitian $\K$-module
\begin{equation}\label{decep}
  E'=X_r\oplus Y_r\oplus Z_0\oplus \K v_0\oplus \K v_0'\oplus \K v'.
\end{equation}
Equip it with the commuting positive form $[\,,\,]_{E'}$ which
extends $[\,,\,]_E$ and makes $v'$ and $E$ perpendicular. Also
recall that $G':=\oU(E')$. Put $K':=\oK(E')$.

Write
\begin{equation}\label{dece3}
  E_3=\K v_0\oplus \K v_0'\oplus \K v',
\end{equation}
and denote by $N_{E_3}$ the unipotent radical of the Borel subgroup
of $\oU(E_3)$ stabilizing the line $\K(v_0'+v')$. For every $t\in
\K^{\times}_+$, denote by $b_t\in \oU(E_3)$ the element specified by
\begin{equation}\label{defbt}
\left\{
   \begin{array}{l}
     b_t(v_0)=v_0,\\
     b_t(v'_0+v')=t (v'_0+v'),\\
     b_t(v'_0-v')=t^{-\tau} (v'_0-v').\\
   \end{array}
\right.
\end{equation}
For every $t\in \{t\in \K^{\times}_+\mid t t^\tau=1\}$, denote by
$c_t\in \oU(E_3)$ the element specified by
\[
\left\{
   \begin{array}{l}
     c_t(v_0)=t v_0,\\
     c_t(v'_0)=v'_0,\\
     c_t(v')=v'.\\
   \end{array}
\right.
\]
Recall the element $g_t\in G'_0 \subset G'$ in (\ref{defgt}). Note
that it also stays in $\oU(E_3)$. By Iwasawa decomposition, we write
\begin{equation}\label{decgt}
  g_t=c_{t''} n_t b_{t'}  k_t, \quad n_t\in N_{E_3},\, k_t\in
  \oK(E_3).
\end{equation}
Then both $t'$ and $t''$ are Nash functions of $t$.

\begin{lem}
 One has that
\[
  t'=2(t^{-2} + t^{2\tau}+2)^{-\frac{1}{2}}.
\]
\end{lem}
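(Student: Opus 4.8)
The plan is to reduce the claim to an explicit computation inside $\oU(E_3)$, carried out in the basis $v_0,v_0',v'$ of $E_3$, which is orthogonal for both $\langle\,,\,\rangle$ and the commuting positive form $[\,,\,]_{E'}$. I would first normalise the positive forms on $\K v_0$, $\K v_0'$ and $\K v'$ so that $[v_0,v_0]_{E'}=[v_0',v_0']_{E'}=[v',v']_{E'}=1$ (automatic in the field cases, and achievable in general without disturbing \eqref{decep}). Together with $\langle v_0,v_0\rangle=\langle v',v'\rangle=1$ and $\langle v_0',v_0'\rangle=-1$ this forces $\theta_{E_3}v_0=v_0$, $\theta_{E_3}v'=v'$, $\theta_{E_3}v_0'=-v_0'$. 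Put $w_\pm:=v_0'\pm v'$. Then $w_\pm$ are isotropic with $\langle w_+,w_-\rangle\ne 0$, the operator $\theta_{E_3}$ interchanges the lines $\K w_+$ and $\K w_-$, and $\K w_+$ is precisely the line stabilised by the Borel whose unipotent radical is $N_{E_3}$, with $(\K w_+)^\perp=\K w_+\oplus\K v_0$. In this basis $g_t$, $b_{t'}$ and $c_{t''}$ are symmetric matrices whose entries -- built from $t,t',t''$ and $\tau$ -- are fixed by the conjugation $\bar{\ }$, because $\bar t=t$ for $t\in\K^\times_+$; hence each of the three operators is self-adjoint for $[\,,\,]_{E_3}$ (in the two $\GL$ cases $b_{t'}$ and $c_{t''}$ are self-adjoint but \emph{not} $[\,,\,]_{E_3}$-orthogonal). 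One records the actions explicitly: $g_t(v_0+v_0')=t(v_0+v_0')$, $g_t(v_0-v_0')=t^{-\tau}(v_0-v_0')$, $g_t v'=v'$; $c_{t''}v_0=t''v_0$, $c_{t''}v_0'=v_0'$, $c_{t''}v'=v'$; $b_{t'}w_+=t'w_+$, $b_{t'}w_-=(t')^{-\tau}w_-$, $b_{t'}v_0=v_0$.

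First I would exploit self-adjointness. Since $g_t^*=g_t$, $k_t^*=k_t^{-1}$ (as $k_t\in\oK(E_3)$), $b_{t'}^*=b_{t'}$ and $c_{t''}^*=c_{t''}$, taking $[\,,\,]_{E_3}$-adjoints in $g_t k_t^{-1}=c_{t''}n_t b_{t'}$ yields $k_t g_t=b_{t'}\,n_t^*\,c_{t''}$, and multiplying the two identities gives
\[
  g_t^2 \;=\; (g_t k_t^{-1})(k_t g_t) \;=\; c_{t''}\,n_t\,b_{t'}^2\,n_t^*\,c_{t''}.
\]
From the defining relation $[u,v]_{E_3}=\langle u,\theta_{E_3}v\rangle$ one gets $n_t^*=\theta_{E_3}\,n_t^{-1}\,\theta_{E_3}$, which therefore stabilises $\theta_{E_3}(\K w_+)=\K w_-$; being unipotent, $n_t^*$ fixes $w_-$. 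Likewise $c_{t''}w_-=w_-$ and $b_{t'}^2 w_-=(t')^{-2\tau}w_-$, while $n_t w_- = w_- + \alpha v_0 + \beta w_+$ for some $\alpha,\beta\in\K$, because $n_t$ stabilises the flag $\K w_+\subset(\K w_+)^\perp$ and $w_-\notin(\K w_+)^\perp$.

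Next I would evaluate the displayed identity on $w_-$ and compare coordinates in the basis $v_0,v_0',v'$. On the left, $g_t^2 w_- = \frac{t^2-t^{-2\tau}}{2}\,v_0 + \frac{t^2+t^{-2\tau}}{2}\,v_0' - v'$. On the right, peeling from the inside out, $c_{t''}n_t b_{t'}^2 n_t^* c_{t''}\,w_- = (t')^{-2\tau}\big(\alpha t''\,v_0 + (1+\beta)\,v_0' + (\beta-1)\,v'\big)$. Comparing the $v'$- and $v_0'$-components gives $(t')^{-2\tau}(\beta-1)=-1$ and $(t')^{-2\tau}(1+\beta)=\frac{t^2+t^{-2\tau}}{2}$; eliminating $\beta$ leaves $(t')^{2\tau}=4\,(t^2+t^{-2\tau}+2)^{-1}$. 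Taking positive square roots (legitimate since $(t')^\tau\in\K^\times_+$) and then applying $\tau$ -- using $\tau^2=1$ and that $\tau$ is a ring automorphism restricting to an automorphism of $\K^\times_+$ -- one obtains $t'=2\,(t^{-2}+t^{2\tau}+2)^{-1/2}$, as claimed. (The $v_0$-component only records the value of $\alpha t''$, which is not needed here; as a byproduct one sees $t'$ is independent of the chosen decomposition.)

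The only genuine chore is checking that everything above is valid uniformly across the five pairs $(\K,\tau)$ in \eqref{five}: that $\bar t=t$ on $\K^\times_+$; that the matrices of $g_t$, $b_{t'}$, $c_{t''}$ are genuinely $[\,,\,]_{E_3}$-self-adjoint rather than merely orthogonal; and that $\theta_{E_3}$ interchanges $\K w_+$ and $\K w_-$ -- this last point being exactly where the normalisation $[v_0',v_0']_{E'}=[v',v']_{E'}$ is used. None of this is deep, but since the difference between $t$ and $t^\tau$ is invisible in the orthogonal and unitary cases and genuine only in the two ``doubled'' $\GL$ cases, it must be tracked carefully throughout -- it is precisely this difference that makes the two exponents in the final formula, $t^{-2}$ and $t^{2\tau}$, asymmetric.
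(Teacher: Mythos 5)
Your argument is correct, but it takes a genuinely different and noticeably longer route than the paper's. The paper simply computes the quantity $[g_t^{-1}(v_0'+v'),\,g_t^{-1}(v_0'+v')]_{E'}$ in two ways: on one hand, writing $g_t^{-1}=k_t^{-1}b_{t'}^{-1}n_t^{-1}c_{t''}^{-1}$ and using that $c_{t''}$ and the unipotent element $n_t$ fix $v_0'+v'$, that $b_{t'}^{-1}(v_0'+v')=t'^{-1}(v_0'+v')$, and that $k_t\in\oK(E_3)$ preserves the positive form, one gets $2t'^{-2}$; on the other hand, the explicit formula $g_t^{-1}(v_0'+v')=\frac{t^{-1}-t^{\tau}}{2}v_0+\frac{t^{-1}+t^{\tau}}{2}v_0'+v'$ together with orthonormality of $v_0,v_0',v'$ for $[\,,\,]_{E'}$ gives $\frac{t^{-2}+t^{2\tau}+2}{2}$, and equating yields the lemma in two lines. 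You instead establish self-adjointness of $g_t$, $b_{t'}$, $c_{t''}$ for the positive form, square the Iwasawa decomposition to get $g_t^2=c_{t''}n_tb_{t'}^2n_t^*c_{t''}$, and compare the $v_0'$- and $v'$-coordinates of both sides applied to $w_-=v_0'-v'$; this is a valid Cartan-type argument (I checked the adjoint identity $n_t^*=\theta_{E_3}n_t^{-1}\theta_{E_3}$, the action on $w_-$, and the final elimination of $\beta$, which reproduces $(t')^{2\tau}=4(t^2+t^{-2\tau}+2)^{-1}$ and hence the stated formula after applying $\tau$). What the paper's approach buys is that one never needs adjoints, self-adjointness, or the unknown coefficients $\alpha,\beta$ of $n_tw_-$: everything unknown ($k_t$, $n_t$, $c_{t''}$) is killed at once by applying the factorization to the highest-weight vector $w_+$ that the Borel data are built around, rather than to $w_-$. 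Your normalization remark at the start is also not needed: orthonormality of $v_0,v_0',v'$ for $[\,,\,]_{E'}$ is automatic from the commuting condition $\theta_{E'}^2=1$ together with the fact that the decomposition of $E'$ is orthogonal for both forms.
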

\begin{proof}
Note that $v_0, v_0',v'$ is an orthonormal basis of $E_3$ with
respect to $[\,,\,]_{E'}$. We have that
\begin{eqnarray*}
  &&\quad [g_t^{-1}(v_0'+v'),g_t^{-1}(v_0'+v')]_{E'}\\
  && =[k_t^{-1}  b_{t'}^{-1} n_t^{-1}c_{t''}^{-1}(v_0'+v'),k_t^{-1}  b_{t'}^{-1} n_t^{-1}c_{t''}^{-1}(v_0'+v')]_{E'}\\
  &&=[t'^{-1}(v_0'+v'),t'^{-1}(v_0'+v')]_{E'}\\
  &&=2 t'^{-2}.
\end{eqnarray*}

On the other hand,
\[
  g_t^{-1}(v_0'+v')=\frac{t^{-1}-t^\tau}{2}v_0+\frac{t^{-1}+t^\tau}{2}v'_0+v',
\]
and
\begin{eqnarray*}
  &&\quad [g_t^{-1}(v_0'+v'),g_t^{-1}(v_0'+v')]_{E'}\\
  &&=\left(\frac{t^{-1}-t^\tau}{2}\right)^2+\left(\frac{t^{-1}+t^\tau}{2}\right)^2+1\\
  &&=\frac{t^{-2}+t^{2\tau}+2}{2}.
\end{eqnarray*}
Therefore the lemma follows.
\end{proof}

\subsection{Majorization of Whittaker functions}

We define a norm function on $G'$ by
\[
  ||g||:=\max\{(\la[gu,gu]_{E'}\ra)^{\frac{1}{2}}\mid u\in
  E',\,\la [u,u]_{E'}\ra=1\}, \quad g\in G',
\]
where $\la \cdot \ra$ is as in (\ref{lr}).

 For every $\tilde{\bt}=(t_1,t_2,\cdots,t_r,t_{r+1})\in
(\K^\times_+)^{r+1}$, write
\begin{equation}\label{xi}
  \xi(\tilde{\bt})=\left\{
                      \begin{array}{ll}
                       \prod_{i=1}^{r} (1+\frac{t_i}{t_{i+1}}), \quad & \textrm{if }  d_\K=1,\smallskip\\
                       \prod_{i=1}^{r} (1+\frac{t_{i,1}}{t_{i+1,1}})\times \prod_{i=1}^{r} (1+\frac{t_{i,2}}{t_{i+1,2}}), \quad & \textrm{if }
                       d_\K=2 \textrm{ and } t_i=(t_{i,1},t_{i,2}).
                      \end{array}
                    \right.
\end{equation}
Write
\[
  a_{\tilde{\bt}}=a_\bt b_{t_{r+1}}\in \GL_{r+1}, \quad
  \textrm{with } \bt=(t_1,t_2,\cdots,t_r).
\]
Recall that $a_\bt$ is defined in Section \ref{defat} and $b_t$ is
defined in \eqref{defbt}.

Following \cite[Proposition 3.1]{Jac}, we have
\begin{leml}\label{whittaker}
Let notations be as in Section \ref{subind}. Let $c_\rho$ be a
positive number, $\abs{\cdot}_{\pi _0}$ a continuous seminorm on
$\pi_0$, and $\abs{\cdot}_{\rho,0}$ a continuous seminorm on $\rho$.
Assume that
\[
    \abs{\Lambda(\rho(g)u)}_{\pi _0}\leq ||g||^{c_\rho} \abs{u}_{\rho,0}, \quad g\in G_0\times \GL_{r+1}, \, u\in \rho.
\]
Then for any positive integer $N$, there is a continuous seminorm
$\abs{\cdot}_{\rho,N}$ on $\rho$ such that
\[
  \abs{\Lambda (\rho(a_{\tilde{\bt}})u)}_{\pi _0}\leq \xi(\tilde{\bt})^{-N} ||a_{\tilde{\bt}}||^{c_\rho}\, \abs{u}_{\rho,N},
  \quad
  \tilde{\bt}\in (\K^\times_+)^{r+1}, \, u\in \rho.
\]
\end{leml}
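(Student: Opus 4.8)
The plan is to follow the strategy of \cite[Proposition 3.1]{Jac}, which bounds Whittaker functions on $\GL_n$ by their behavior along the torus. The key point is that $\Lambda\circ\rho$ is essentially a $\rho$-valued matrix coefficient twisted by the Whittaker functional $\lambda$, so the rapid decay comes from integrating against the generic character $\psi_{r+1}$. Concretely, I would first fix a one-parameter unipotent subgroup: for each $i=1,\dots,r$, let $u_i(x)\in N_{r+1}$ be the root subgroup corresponding to the simple root attached to the pair $(X_i, X_{i+1})$ (so $\psi_{r+1}\circ u_i$ is a nontrivial additive character of $\K$), and observe the commutation relation $a_{\tilde\bt}\, u_i(x) = u_i\bigl((t_i/t_{i+1})\, x\bigr)\, a_{\tilde\bt}$ (componentwise when $d_\K=2$), which accounts for the factors $1+t_i/t_{i+1}$ in $\xi(\tilde\bt)$.

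Next, the smoothness of $\rho$ together with the hypothesis $\abs{\Lambda(\rho(g)u)}_{\pi_0}\leq \norm{g}^{c_\rho}\abs{u}_{\rho,0}$ gives, after acting by a differential operator in the direction of $u_i$ and using the Dixmier--Malliavin factorization (or simply that $\rho$ is a smooth Fréchet representation of moderate growth), an estimate of the form
\[
  \Bigl(\tfrac{t_i}{t_{i+1}}\Bigr)^{N}\,\abs{\Lambda(\rho(a_{\tilde\bt})u)}_{\pi_0}
  \;=\;\Bigl\lvert \Lambda\Bigl(\rho(a_{\tilde\bt})\,\rho(Z_i^N)u\Bigr)\Bigr\rvert_{\pi_0}
  \;\ll\; \norm{a_{\tilde\bt}}^{c_\rho}\,\abs{\rho(Z_i^N)u}_{\rho,0},
\]
where $Z_i$ is a suitable element of the enveloping algebra obtained by integrating the character $\psi_{r+1}\circ u_i$ against a test function; here one uses that $\norm{a_{\tilde\bt}\,u_i(x)}$ is comparable to $\norm{a_{\tilde\bt}}$ up to a polynomial in $x$ that can be absorbed. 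Running this for each $i=1,\dots,r$ simultaneously and taking the product (or rather a single differential operator built from all the $Z_i$) yields a bound by $\bigl(\prod_i (1+t_i/t_{i+1})\bigr)^{-N}$ times $\norm{a_{\tilde\bt}}^{c_\rho}$ times a higher continuous seminorm $\abs{u}_{\rho,N}$. The case $d_\K=2$ is handled by treating the two $\K$-components separately, producing the two products in \eqref{xi}.

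The main obstacle I anticipate is the bookkeeping that makes the majorization uniform in $\tilde\bt$: one must ensure that the seminorm $\abs{\cdot}_{\rho,N}$ produced does not depend on $\tilde\bt$, and that the "error" from replacing $a_{\tilde\bt}\,u_i(x)$ by $a_{\tilde\bt}$ inside $\norm{\cdot}^{c_\rho}$ is genuinely polynomial in $x$ so that it is swallowed by the Schwartz test function used to build $Z_i$. This requires a careful choice of the test functions (independent of $\tilde\bt$) and an interchange of the finitely many smoothing integrals, exactly as in \cite{Jac}; once the one-variable mechanism is set up, iterating over the $r$ simple roots and then doubling for $d_\K=2$ is routine.
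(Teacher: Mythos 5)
Your proposal is correct and follows essentially the same route as the paper (both are the argument of Jacquet's Proposition 3.1): extract the decay in $t_i/t_{i+1}$ from the $\psi_{r+1}$-equivariance of $\Lambda$ along the simple-root unipotent directions, conjugated past $a_{\tilde{\bt}}$, and absorb the result into a higher continuous seminorm. The paper implements this purely infinitesimally --- choosing root vectors $Y_i$ with $\Ad_{a_{\tilde{\bt}}}Y_i=(t_i/t_{i+1})Y_i$ and $\psi_{r+1}(Y_i)\neq 0$, so that $\Lambda(\rho(a_{\tilde{\bt}})\rho(Y^{\mathbf N})u)=\bigl(\prod_i m_i^{N_i}\bigr)\tilde{\bt}^{(\mathbf N)}\Lambda(\rho(a_{\tilde{\bt}})u)$ is an exact identity and the hypothesis is applied at the same group element $a_{\tilde{\bt}}$ --- which makes your anticipated obstacle (comparing $\norm{a_{\tilde{\bt}}u_i(x)}$ with $\norm{a_{\tilde{\bt}}}$ and the Dixmier--Malliavin smoothing) evaporate entirely.
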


\begin{proof}To ease the notation, we
assume that $d_\K=1$. The other case is proved in the same way. For
every $i=1,2,\cdots, r$, let $Y_i$ be a vector in the Lie algebra of
$\GL_{r+1}$ so that
\[
  \Ad_{a_{\tilde{\bt}}}Y_i=\frac{t_i}{t_{i+1}} Y_i, \qquad
  \tilde{\bt}=(t_1,t_2,\cdots,t_{r+1}),
\]
and
\[
  m_i:=-\psi_{r+1}(Y_i)\neq 0.
\]
Here $\psi_{r+1}$ stands for the differential of the same named
character. Similar notations will be used for the differentials of
representations.

For every sequence $\mathbf{N}=(N_1,N_2,\cdots N_r)$ of non-negative
integers, write
\[
  {\tbt}^{(\mathbf N)}:=\prod_{i=1}^r (t_i/t_{i+1})^{N_i},
  \quad \tbt=(t_1,t_2,\cdots,t_{r+1})\in (\K^\times_+)^{r+1}.
\]
Also write
\[
  Y^{\mathbf N}=Y_1^{N_1} Y_2^{N_2}\cdots Y_r^{N_r},
\]
which is an element in the universal enveloping algebra of the Lie
algebra of $\GL_{r+1}$.

Then
\begin{eqnarray*}
  \Lambda(\rho(a_{\tbt})\rho(Y^{\mathbf N})u)&=&\Lambda(\rho(\Ad_{a_{\tbt}}Y^{\mathbf N})\rho(a_{\tbt})u)\\
  &=&{\tbt}^{(\mathbf N)} \Lambda(\rho(Y^{\mathbf N})\rho(a_{\tbt})u)\\
  &=&\left(\prod_{i=1}^r m_i^{N_i} \right)\tbt^{(\mathbf N)}
  \Lambda(\rho(a_{\tbt})u).
\end{eqnarray*}
Therefore
\begin{equation}\label{whitt1}
  \tbt^{(\mathbf N)}\abs{\Lambda(\rho(a_{\tbt})u)}_{\pi _0}\leq \abs{m}^{-\mathbf N}||a_{\tbt} ||^{c_\rho} \abs{\rho(Y^{\mathbf
  N})u}_{\rho,0},
\end{equation}
where
\[
   \abs{m}^{-\mathbf N}:=\prod_{i=1}^r \abs{m_i}^{-N_i}.
\]

Given the positive integer $N$, write
\[
  \xi(\tbt)^N=\sum_{\mathbf N} a_{\mathbf N} \tbt^{(\mathbf
  N)},
\]
where $a_{\mathbf N}$'s are nonnegative integers.  In view of
\eqref{whitt1}, we finish the proof by setting
\[
  \abs{u}_{\rho,N}:=\sum_{\mathbf N} a_{\mathbf N}\abs{m}^{-\mathbf N}
  \abs{\rho(Y^{\mathbf N})u}_{\rho,0}.
\]

\end{proof}

\subsection{Convergence of an integral}

\begin{leml}\label{cJ}
For any non-negative continuous semialgebraic function $J$ on
$(\K^\times_+)^{r+1}$, there is a positive number $c_J$ with the
following property: for every $s\in \R^{d_\K}$ with $s>c_J$, there
is a positive integer $N$ such that
\begin{equation}\label{intj}
  \int_{(\K^\times_+)^{r+1}} (t_1 t_2\cdot\, \cdots\, \cdot
  t_r t_{r+1}')^s\, \xi(t_1, t_2, \cdots,t_r,t_{r+1}')^{-N} \, J(\tbt)\,d^\times
  {\tbt}<\infty,
\end{equation}
where
\[
   \tbt=(t_1, t_2, \cdots,t_r,t_{r+1}),\quad\quad t_{r+1}'=2(t_{r+1}^{-2} +
   t_{r+1}^{2\tau}+2)^{-\frac{1}{2}},
\]
and $\xi$ is defined in \eqref{xi}.
\end{leml}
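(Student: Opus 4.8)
The plan is to majorize $J$ by an explicit polynomial, change variables so that the integrand factors, and then reduce \eqref{intj} to a product of one-dimensional Gamma-type integrals whose convergence is transparent. As is standard I would carry out the details for $d_\K=1$; the case $d_\K=2$ is entirely analogous, working coordinate by coordinate. When $d_\K=1$ the involution $\tau$ acts trivially on $\K^\times_+=\R^\times_+$, so $t_{r+1}^{-2}+t_{r+1}^{2\tau}+2=(t_{r+1}+t_{r+1}^{-1})^2$ and hence $t_{r+1}'=2(t_{r+1}+t_{r+1}^{-1})^{-1}\le 1$.

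The first step is a polynomial bound on $J$: since $J$ is a non-negative continuous semialgebraic function on $(\R^\times_+)^{r+1}$, there are $C>0$ and a non-negative integer $D$ with
\[
  J(\tbt)\le C\prod_{i=1}^{r+1}(t_i+t_i^{-1})^D,\qquad \tbt\in(\R^\times_+)^{r+1}.
\]
(One way to see this: compactify each factor $\R^\times_+\cong\R$, so that $J$ extends to a semialgebraic function whose only singularities lie at the ends $0,\infty$ of each variable, and apply the {\L}ojasiewicz inequality comparing $J$ to the proper semialgebraic function $\prod_i(t_i+t_i^{-1})$; alternatively invoke the structure theory of semialgebraic functions directly.) I would then set $c_J:=D$.

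Next, substitute this bound into \eqref{intj}. Two elementary observations peel off the $t_{r+1}$-variable. First, $t_{r+1}'\le 1$ gives $1+t_r/t_{r+1}'\ge 1+t_r$, so the last factor of $\xi(t_1,\dots,t_r,t_{r+1}')^{-N}$ is at most $(1+t_r)^{-N}$ and no longer involves $t_{r+1}$. Second, $(t_{r+1}')^s(t_{r+1}+t_{r+1}^{-1})^D=2^s(t_{r+1}+t_{r+1}^{-1})^{D-s}$, which for $s>D=c_J$ is integrable against $d^\times t_{r+1}$ on $\R^\times_+$. Integrating out $t_{r+1}$ and then performing the unipotent substitution $u_i=t_i/t_{i+1}$ for $1\le i\le r-1$, $u_r=t_r$ — which preserves $d^\times t_1\cdots d^\times t_r=d^\times u_1\cdots d^\times u_r$ — every surviving factor becomes a monomial: $t_1\cdots t_r=\prod_{j=1}^r u_j^{\,j}$; the submultiplicativity $ab+(ab)^{-1}\le(a+a^{-1})(b+b^{-1})$ yields $\prod_{i=1}^r(t_i+t_i^{-1})^D\le\prod_{j=1}^r(u_j+u_j^{-1})^{jD}$; and $\prod_{i=1}^{r-1}(1+u_i)^{-N}\cdot(1+t_r)^{-N}=\prod_{j=1}^r(1+u_j)^{-N}$. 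Hence \eqref{intj} is bounded by a constant times
\[
  \prod_{j=1}^{r}\int_{\R^\times_+}u_j^{\,js}\,(1+u_j)^{-N}\,(u_j+u_j^{-1})^{jD}\,d^\times u_j .
\]

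Finally one reads off the convergence. Near $u_j=0$ the $j$-th integrand is $\asymp u_j^{\,j(s-D)}\,d^\times u_j$, integrable precisely when $s>D$; near $u_j=\infty$ it is $\asymp u_j^{\,j(s+D)-N}\,d^\times u_j$, integrable precisely when $N>j(s+D)$. Therefore, given $s>c_J=D$, any positive integer $N$ with $N>r(s+D)$ makes all $r$ factors, and the separated $t_{r+1}$-integral, finite; this proves the lemma. The one input that is not pure calculus is the polynomial majorization of the semialgebraic $J$ (this is where semialgebraicity is used), and the only delicate bookkeeping is tracking that $t_{r+1}'$ enters both the leading power $(t_1\cdots t_r t_{r+1}')^s$ and the last factor of $\xi$; once these are handled everything collapses to one-variable Beta/Gamma integrals.
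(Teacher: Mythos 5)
Your proposal is correct and follows essentially the same route as the paper's own proof: a polynomial-growth majorization of the non-negative continuous semialgebraic function $J$ (the paper invokes \cite[Proposition 2.6.2]{BCR} for this), a multiplicative change of variables to the successive ratios $t_i/t_{i+1}$ under which $d^\times\tbt$ is preserved and the integrand factors, and a reduction to a product of one-variable integrals that converge once $s>c_J$ and $N$ is taken large. The only substantive difference is how the last variable is decoupled: you use $t'_{r+1}\le 1$ to replace $1+t_r/t'_{r+1}$ by $1+t_r$, which is valid for $d_\K=1$ but fails componentwise when $d_\K=2$ (only the product $\la t'_{r+1}\ra$ is $\le 1$ there), so for that case the paper's variant --- absorbing $t'_{r+1}$ into the final ratio $\alpha_r=t_r/t'_{r+1}$ of the change of variables --- is the cleaner formulation, though this does not affect the substance of your argument.
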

\begin{proof}To ease the notation, we again assume that $d_\K=1$. Note that the change of
variable
\[
  \tbt\mapsto \tilde{\alpha}:=\left(\alpha_1=\frac{t_1}{t_2},
  \,\alpha_2=\frac{t_2}{t_3}, \cdots,
  \alpha_{r-1}=\frac{t_{r-1}}{t_r}, \alpha_{r}=\frac{t_r}{t'_{r+1}},
  t_{r+1}\right)
\]
is a measure preserving Nash isomorphism from $(\K^\times_+)^{r+1}$
onto itself. So $J$ is also a continuous semialgebraic function of
$\tilde \alpha$. It is well known that every continuous
semialgebraic function (on a closed semialgebraic subset of a finite
dimensional real vector space) is of polynomial growth
(\cite[Proposition 2.6.2]{BCR}). (This is the reason that we work in
the semialgebraic setting in this article.) Therefore there is a
positive number $c_J'$ such that
\[
  J(\tilde{\alpha})\leq
  \left(\prod_{j=1}^r(\alpha_j+\alpha_j^{-1})^{c_J'}\right)\times
  (t_{r+1}+t_{r+1}^{-1})^{c_J'},\quad \tbt\in (\K^\times_+)^{r+1}.
\]

Take a positive number $c_J$, large enough so that
\[
  \int_{\K^\times_+} {t_{r+1}'}^{(r+1)c_J} (t_{r+1}+t_{r+1}^{-1})^{c_J'}\, d^\times t_{r+1}<\infty.
\]
and
\[
   \int_0^1
    \alpha_j^{j c_J}(\alpha_j+\alpha_j^{-1})^{c_J'}\,d^\times
   \alpha_j<\infty, \quad j=1,2,\cdots, r.
\]
The integral (\ref{intj}) is equal to
\begin{eqnarray*}
  &&\quad  \int_{(\K^\times_+)^{r+1}} \alpha_1^s \alpha_2^{2s}\cdots \alpha_r^{rs}  {t'}_{r+1}^{(r+1)s} \prod_{j=1}^r(1+\alpha_j)^{-N} \,
   J(\tilde{\alpha})\,d^\times\tilde{\alpha}\\
   &&\leq \left(\prod_{j=1}^r \int_{\K^\times_+}
   \frac{\alpha_j^{js}(\alpha_j+\alpha_j^{-1})^{c_J'}}{(1+\alpha_j)^{N}} \,d^\times
   \alpha_j\right)\times \int_{\K^\times_+} {t_{r+1}'}^{(r+1)s} (t_{r+1}+t_{r+1}^{-1})^{c_J'}\, d^\times
   t_{r+1}.
\end{eqnarray*}

Now it is clear that the above integral converges when $s>c_J$ and
$N$ is large enough so that
\[
  \int_1^\infty  \frac{\alpha_j^{js}(\alpha_j+\alpha_j^{-1})^{c_J'}}{(1+\alpha_j)^{N}} \,d^\times
  \alpha_j<\infty,\quad j=1,2,\cdots, r.
\]

\end{proof}

\subsection{End of proof of Proposition \ref{converge}}

Take a continuous seminorm $\abs{\cdot}_{\pi,0}$ on $\pi$ and a
continuous seminorm $\abs{\cdot}_{\pi _0}$ on $\pi_0$ such that
\begin{equation}\label{mod1}
  \abs{\mu(u,v)}\leq \abs{u}_{\pi,0}\cdot \abs{v}_{\pi _0}, \quad u\in \pi,
  \,v\in\pi_0.
\end{equation}
Take a positive integer $c_\pi$ and a continuous seminorm
$\abs{\cdot}_{\pi,1}$ on $\pi$ such that
\begin{equation}\label{mod2}
   \abs{\pi(g)u}_{\pi,0}\leq ||g||^{c_\pi} \abs{u}_{\pi,1}, \quad g\in G, \, u\in \pi.
\end{equation}
Take a positive integer $c_\rho$ and a continuous seminorm
$\abs{\cdot}_{\rho,0}$ on $\rho$ such that
\begin{equation}\label{mod3}
    \abs{\Lambda(\rho(g)(u))}_{\pi _0}\leq ||g||^{c_\rho} \abs{u}_{\rho,0}, \quad g\in G_0\times \GL_{r+1}, \, u\in \rho.
\end{equation}

Now assume that
\[
  g=a_\bt g_t k,\quad  \bt=(t_1,t_2,\cdots, t_r)\in (\K^\times_+)^r, \,t\in \K^\times_+,\,
  k\in K.
\]
Write
\[
  g_t=c_{t''} n_t b_{t'}  k_t
\]
as in (\ref{decgt}). Then for all $s\in \C^{d_\K}$, $u\in \pi$,
$f\in \pi'_s$, we have
\begin{eqnarray*}
 \mu(\pi(g)u,\,\Lambda(f(g)))
 &&=\mu(\pi(c_{t''} n_t b_{t'}  a_\bt  k_t k)u,\,\Lambda(f(c_{t''} n_t b_{t'}  a_\bt  k_t k)))\\
 && \qquad (\textrm{$k_t\in \oU(E_3)$ commutes with $a_\bt\in \GL_r$})\\
 &&=\mu(\pi(n_t b_{t'}  a_\bt  k_t k)u,\,\Lambda(f(n_t b_{t'}  a_\bt  k_t k)))\\
 &&\qquad (\textrm{By Lemma \ref{srinv} and the fact that
 $c_{t''}\in G_0\subset S_r$})\\
 &&=\mu(\pi(n_t b_{t'}  a_\bt  k_t k)u,\,(t_1 t_2\cdots t_r t')^s \Lambda(\rho(a_\bt b_{t'}) f(k_t k)))\\
 &&\qquad (\textrm{$n_t\in N_{P_{r+1}'}$ and $a_\bt b_{t'}\in
 \GL_{r+1}$}).
\end{eqnarray*}
Therefore by \eqref{mod1}, \eqref{mod2} and the fact that the norm
function $||\cdot ||$ on $G'$ is right $K'$-invariant, we have
\begin{eqnarray}
 \label{nint}&&\quad \abs{\mu(\pi(g)u,\,\Lambda(f(g)))}\\
  \nonumber &&\leq (t_1 t_2\cdots t_r t')^{\Re(s)} \times
  ||n_t b_{t'}  a_\bt||^{c_\pi} \times \abs{u}_{\pi,1} \times
  \abs{\Lambda(\rho(a_\bt b_{t'}) f(k_t k))}_{\pi _0}.
\end{eqnarray}

Let $J$ be the (nonnegative continuous semialgebraic) function on
$(\K^\times_+)^{r+1}$ defined by
\[
  J(\bt,t):= ||n_t b_{t'}  a_\bt||^{c_\pi}\times || a_\bt
  b_{t'}||^{c_\rho}\times \delta_{P_r}^{-1}(a_{\bt})\, \delta_{B_r}^{-1}(a_{\bt})\,
  J_{E_0'}(t).
\]
Let $c_\mu:=c_J$ be as in Lemma \ref{cJ}, and assume that the real
part $\Re(s)>c_\mu$. Let $N$ be a large integer as in Lemma \ref{cJ}
so that
\[
  c_{s,N}:=\int_{(\K^\times_+)^{r}\times  \K^\times_+} (t_1 t_2\cdots t_r t')^{\Re(s)}\xi(\bt,t')^{-N} J(\bt,t) \,d^\times \bt\, d^\times
   t<\infty.
\]

Take a continuous seminorm $\abs{\cdot}_{\rho,N}$ on $\rho$ as in
Lemma \ref{whittaker}. Then we have
\begin{equation}\label{nint2}
  \abs{\Lambda(\rho(a_\bt b_{t'}) f(k_t k))}_{\pi _0}\leq \xi(\bt,t')^{-N}||a_\bt
  b_{t'}||^{c_\rho} \abs{ f(k_t k)}_{\rho,N}\leq  \xi(\bt,t')^{-N}||a_\bt
  b_{t'}||^{c_\rho} \abs{ f}_{\pi'},
\end{equation}
where
\[
  \abs{ f}_{\pi'}:=\max\{\abs{ f(k')}_{\rho,N}\mid k'\in K'\},
\]
which defines a continuous seminorm on $\pi_s'$. Combining
(\ref{nint}) and (\ref{nint2}), we get
\begin{eqnarray}
 \label{nint3}&&\quad \abs{\mu(\pi(g)u,\,\Lambda(f(g)))}\\
  \nonumber &&\leq \abs{u}_{\pi,1}\times \abs{ f}_{\pi'}\times (t_1 t_2\cdots t_r t')^{\Re(s)} \times \xi(\bt,t')^{-N}\times ||n_t b_{t'}  a_\bt||^{c_\pi} \times ||a_\bt
  b_{t'}||^{c_\rho}.
\end{eqnarray}

Then by Proposition \ref{intf} and (\ref{nint3}), we have
\begin{eqnarray*}
 &&\quad \int_{S_r\backslash G}\abs{\mu(\pi(g)u,\,\Lambda(f(g)))}\,dg\\
 &&=\int_{(\K^\times_+)^r \times \K^\times_+\times K}
  \abs{\mu(\pi(a_\bt g_t k)u,\,\Lambda(f(a_\bt g_t k)))}\,\delta_{P_r}^{-1}(a_{\bt})\, \delta_{B_r}^{-1}(a_{\bt})\, J_{E_0'}(t)\,d^\times \bt \,d^\times
  t\,dk\\
   &&\leq \abs{u}_{\pi,1}\times \abs{ f}_{\pi'}\times \int_{(\K^\times_+)^{r}\times  \K^\times_+} (t_1 t_2\cdots t_r t')^{\Re(s)}\xi(\bt,t')^{-N} J(\bt,t) \,d^\times \bt\, d^\times
   t\\
   &&=c_{s,N}\times  \abs{u}_{\pi,1}\times \abs{ f}_{\pi'}.
    \end{eqnarray*}
Therefore the integral $I_\mu(f,u)$ converges absolutely. Finally,
\[
   \abs{I_\mu(f,u)}\leq \int_{S_r\backslash
   G}\abs{\mu(\pi(g)u,\,\Lambda(f(g)))}\,dg\leq c_{s,N}\times  \abs{u}_{\pi,1}\times \abs{
   f}_{\pi'},
\]
which proves the continuity of $I_\mu$. This finishes the proof of
Proposition \ref{converge}.

\end{document}